\documentclass[12pt,reqno]{amsart}
\usepackage{amsmath,amssymb,latexsym,textcomp,mathrsfs}
\usepackage[all]{xy}
\usepackage{graphicx}
\usepackage{bm,amsmath, amsthm, amssymb, amsfonts}
\usepackage{times}
\usepackage[english]{babel}

\setlength{\textheight}{210mm} \setlength{\textwidth}{155mm}
\setlength{\oddsidemargin}{1.25mm}
\setlength{\evensidemargin}{1.25mm} \setlength{\topmargin}{0mm}

\setbox0=\hbox{$+$}
\newdimen\plusheight
\plusheight=\ht0
\def\+{\;\lower\plusheight\hbox{$+$}\;}

\setbox0=\hbox{$-$}
\newdimen\minusheight
\minusheight=\ht0
\def\-{\;\lower\minusheight\hbox{$-$}\;}

\setbox0=\hbox{$\cdots$}
\newdimen\cdotsheight
\cdotsheight=\plusheight
\def\cds{\lower\cdotsheight\hbox{$\cdots$}}

\numberwithin{equation}{section}
\theoremstyle{plain}
\newtheorem{theorem}{Theorem}[section]

\newtheorem{corollary}{Corollary}[section]
\newtheorem{definition}{Definition}[section]

\newtheorem{remark}{Remark}[section]
\newtheorem{example}{Example}[section]

  \newenvironment{nouppercase}{%
   \renewcommand{\uppercasenonmath}[1]{}}{}
	
	 \newcommand{\Keywords}[1]{\par\noindent
   {\small{Keywords and phrases}: #1}}
   
   \newcommand{\AMS}[1]{\par\noindent
   {\small{AMS Subject Classification (2010)}: #1}}

\begin{document}

\title {$ (j-i)sg_\kappa^*$-CLOSED  SETS AND PAIRWISE SEMI $ T_\omega$-AXIOM IN BISPACES}

 \author{Jagannath Pal}
 \author{Amar Kumar Banerjee}
 \newcommand{\acr}{\newline\indent}
 \maketitle
 \address{ Department of Mathematics, The University of Burdwan, Golapbag, East Burdwan-713104,
 West Bengal, India.
 Email: jpalbu1950@gmail.com;   
 Email: akbanerjee@math.buruniv.ac.in.\\}
   
\begin{abstract}
Here we have introduced the ideas of $ (j-i)sg_\kappa^*$-closed sets and  a semi generalized closed set in a bispace; $ i,j=1,2; i\not=j $  and then have studied on pairwise semi $T_0 $-axiom,  pairwise semi $T_1 $-axiom and pairwise semi $T_\omega $-axiom. We have  investigated some of their topological properties and also established a relation among these axioms under some additional conditions. 
\end{abstract}

\begin{nouppercase}
\maketitle
\end{nouppercase}

\let\thefootnote\relax\footnotetext{
\AMS{Primary 54A05, 54D10}
\Keywords {$ (j-i)sg_\kappa^* $-closed set; $ i,j=1,2; i\not=j $,  pairwise semi $T_0 $-axiom,  pairwise semi $T_1 $-axiom, pairwise semi $T_\omega $-axiom, pairwise strongly semi-symmetric bispace.}

}

\section{\bf Introduction}
\label{sec:int}
For many years several generalization on topological spaces  are done in a more general structure of spaces and obtained interesting results  \cite{CA,WD,DM, MBD}. A. D. Alexandroff  \cite{AD} generalized a topological space  to a $ \sigma $-space (or simply a space) by weakening union requirements where only countable union of open sets were taken to be open. J. C. Kelly \cite{JK} turned his attention to generalize topological spaces to a bitopological space. Bitopological spae was further generalized to a bispace by Lahiri and Das \cite{LK}. Then lot of contribution were made in this direction by several authors \cite{AS, BS, BP, TA}. Simultaneously, closed sets in topological spaces are also generalized in many ways and several topological properties were studied on different spaces. N. Levine \cite{NL} gave the idea of generalized closed sets in a topological space. In 1987, Bhattacharyya and Lahiri \cite{BL} introduced the class of semi-generalized closed sets in  topological spaces. P. Das and Rashid  \cite{DR,RD} defined $ g^* $-closed set, $ sg^* $-closed set in $ \sigma $-spaces and studied their various properties. M. S. Sarsak (2011) \cite{MS} studied $ g_\mu $-closed sets on a generalized topological space. 

In this paper we have studied the notion of semi-generalized closed sets in a bispace $ (X,\kappa_1, \kappa_2) $ by introducing $ (j-i)sg_\kappa^*$-closed sets; $ i,j=1,2; i\not=j $. We have also introduced pairwise semi $T_0 $-axiom,  pairwise semi $T_1 $-axiom, pairwise semi $T_\omega $-axiom in a bispace and  investigated some of their various topological properties and established a relation among these axioms under certain conditions.

 \section{\bf Preliminaries}
 \label{sec:pre}

\begin{definition}\label{1}
\cite{AD}. Let $\mathcal{P}$ be a collection  of subsets of a non empty set $ X $  satisfying the following axioms:

(1)	 The intersection of a countable number of sets in $\mathcal{P}$ is a set in $\mathcal{P}$,     

(2)  The union of a finite number of sets in $\mathcal{P}$ is a set in $\mathcal{P}$,

(3) The empty set $ \emptyset $ and the whole set $ X $ are sets in $\mathcal{P}$,

then the set $ X $ is called an Alexandroff space or a $ \sigma $-space or simply a space  and sets of $\mathcal{P}$ are called closed sets,  complements are open sets. We may take open sets in lieu of closed sets in the definition subject to the conditions that countable summability, finite intersectability, the whole set $ X $ and $ \emptyset $ should be open. Sometimes we denote the collection of all such open sets by $\kappa $ and the $ \sigma $-space by $(X,\kappa)$.  
\end{definition}

\begin{definition}\label{2} (cf. \cite{JK}) . Suppose $ \kappa_1 $ and $ \kappa_2 $ are two collections of subsets of a nonempty set $ X $ such that $ (X, \kappa_1) $ and $ (X, \kappa_2) $ are two $ \sigma $-spaces, then $ (X,\kappa_1, \kappa_2) $ is called a bispace. 
\end{definition}

\begin{definition}\label{3} (cf. \cite{LD}).  A set $ B $ in a bispace $ (X,\kappa_1, \kappa_2) $ is said to be semi $ \kappa_i $-open ($ s\kappa_i $-open in short) if there exists a $ \kappa_i $-open set $ G $ in $ X $ such that $ G\subset B \subset \overline{G_{\kappa_i}} $; $ B $ is said to be semi $ \kappa_i $-closed ($ s\kappa_i $-closed in short) if $ X-B $ is semi $ \kappa_i $-open; $ i=1,2 $ where $ \overline{G_{\kappa_i}} $ is the $ \kappa_i $-closure of $ G $ defined parallely as in a topological space.
\end{definition}

Note that a topological space is a space but in general $ \kappa $ is not a topology. We take the definitions of semi-closure,   semi-interior,  semi-limit point of a set in a $ \sigma $-space similar as in the case of a topological space. Closure (resp. s-closure) of a set may not be  closed (resp. semi closed). The space  $(X, \kappa)$ will simply be denoted by $ X $ and sets  always mean subsets of $ X $.  The complement of a set $ A $ is denoted by  $ A ^c $. Throughout the paper $ R $ and $ Q $ stand respectively for the set of real numbers and the set of rational numbers.

We denote the class of all $ s\kappa_i $-open sets and $ s\kappa_i $-closed sets in a bispace $ (X,\kappa_1, \kappa_2) $ respectively by $ \kappa_i$-$s.o.(X) $ and $\kappa_i$-$s.c.(X);  i=1,2 $.

\begin{definition}\label{4} (cf.\cite{JK}). A  bispace $ (X,\kappa_1, \kappa_2) $ is called pairwise semi-$ T_0 $ if for any pair of distinct points of $X$, either there is a semi $ \kappa_1 $-open set $ U $ such that $ x \in U,  y \not \in U $ or, there is a semi $ \kappa_2 $-open set $ V $ such that $ y\in V , x\not\in V $.
\end{definition}

\begin{definition}\label{5}(cf.\cite{RL}). A bispace $ (X,\kappa_1, \kappa_2) $ is said to be pairwise semi-$ T_1 $ if for any pair of distinct points $ x, y \in X $ there exist semi $ \kappa_1 $-open set  $ U $ and semi $ \kappa_2 $-open set $ V $ such that $ x \in U,  y \not \in U,  y\in V , x\not\in V $.\end{definition}

If $ B$ is a subset of a bispace $ (X,\kappa_1, \kappa_2) $, then we denote semi $ \kappa_i $-closure of $ B $ and semi $ \kappa_i $-derived set of $ B $ respectively by $ \overline{sB_{\kappa_i}} $ and $ s B_{\kappa_i}' $.

\begin{theorem}\label{6}  \cite {PR}.  If $ B\subset (X,\kappa_1, \kappa_2) $, a bispace, then $ \overline{sB_{\kappa_i}} = B\cup s B_{\kappa_i}'; i=1, 2 $.
\end{theorem}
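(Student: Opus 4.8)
The plan is to reduce everything to the neighbourhood description of the semi $\kappa_i$-closure and then argue exactly as one does for the ordinary closure in a topological space. Fix $i \in \{1,2\}$; the reasoning is identical for $i=1$ and $i=2$. The first step is to record the characterization that, for a point $x \in X$, we have $x \in \overline{sB_{\kappa_i}}$ if and only if every $s\kappa_i$-open set $U$ with $x \in U$ satisfies $U \cap B \neq \emptyset$. This follows from viewing $\overline{sB_{\kappa_i}}$ as the intersection of all $s\kappa_i$-closed supersets of $B$: if some $s\kappa_i$-open $U \ni x$ misses $B$, then $X \setminus U$ is an $s\kappa_i$-closed superset of $B$ avoiding $x$, so $x \notin \overline{sB_{\kappa_i}}$; conversely, if $x \notin \overline{sB_{\kappa_i}}$ there is an $s\kappa_i$-closed $F \supseteq B$ with $x \notin F$, and $U = X \setminus F$ is the desired separating neighbourhood. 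I would stress that this equivalence invokes only individual $s\kappa_i$-closed sets, so it remains valid in the weakened setting of Definition~\ref{1} even though $\overline{sB_{\kappa_i}}$ itself need not be $s\kappa_i$-closed.

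With this in hand, the containment $B \cup sB_{\kappa_i}' \subseteq \overline{sB_{\kappa_i}}$ is immediate. If $x \in B$, then every $s\kappa_i$-open $U \ni x$ already contains the point $x \in B$, so $U \cap B \neq \emptyset$. If $x \in sB_{\kappa_i}'$, then by the definition of a semi $\kappa_i$-limit point every $s\kappa_i$-open $U \ni x$ meets $B$ in a point different from $x$, so again $U \cap B \neq \emptyset$. In either case the characterization gives $x \in \overline{sB_{\kappa_i}}$.

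For the reverse containment, I would take $x \in \overline{sB_{\kappa_i}}$ and suppose $x \notin B$, aiming to show $x \in sB_{\kappa_i}'$. Let $U$ be any $s\kappa_i$-open set with $x \in U$. By the characterization $U \cap B \neq \emptyset$, and since $x \notin B$ every point of $U \cap B$ differs from $x$; hence $U$ contains a point of $B$ other than $x$. As $U$ was arbitrary, $x$ is a semi $\kappa_i$-limit point of $B$, that is, $x \in sB_{\kappa_i}'$. Thus $\overline{sB_{\kappa_i}} \subseteq B \cup sB_{\kappa_i}'$, and combining the two inclusions yields the claimed equality for each $i = 1,2$.

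The only genuinely delicate point, which I would flag as the main obstacle, is justifying the neighbourhood characterization in the space framework rather than in a topology: because a space admits only countable unions of open sets, one cannot take for granted that the semi-closure is semi-closed, and one must phrase the argument purely in terms of single $s\kappa_i$-closed (equivalently $s\kappa_i$-open) sets, as done above. Once that characterization is secured, the remainder is a routine transcription of the classical topological proof.
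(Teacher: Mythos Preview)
Your proof is correct, but note that the paper does not actually prove this theorem: it is quoted from \cite{PR} as a preliminary result and stated without argument. Your argument is the expected one---reducing to the neighbourhood characterization of $\overline{sB_{\kappa_i}}$ via single $s\kappa_i$-closed supersets and then splitting on whether $x\in B$---and your care in noting that the characterization only uses individual semi-closed sets (so that the possible failure of $\overline{sB_{\kappa_i}}$ to be semi-closed in a $\sigma$-space is irrelevant) is exactly the point that distinguishes the space setting from the topological one.
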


\begin{definition}\label{7} (cf. \cite{RL}).  A bispace $ (X,\kappa_1, \kappa_2) $ is said to be pairwise semi-$ R_0 $  if for every semi $ \kappa_i $-open set $ U, x\in U $ implies $ \overline{s\{x\}_{\kappa_j}}\subset U; i, j=1, 2; i\not=j $.
\end{definition}

\begin{definition}\label{8}  \cite{BL} . A topological space is called  semi-$ T_\frac{1}{2} $ if every $sg$-closed set $ D $ is semi-closed (where $ D $ is $sg$-closed if $ \overline{s(D)}\subset O, O $ is s-open, $ D\subset O $).
\end{definition}

\begin{definition}\cite{MS}\label{9}.
Let $ D $ be a subset in a bispace $ (X,\kappa_1, \kappa_2) $. We define semi-kernel of $ D $ with respect to $ \kappa_i; i=1,2 $ denoted by $ sker_i(D)=\cap \{U: D\subset U; U $ is semi $ \kappa_i $-open\}. 
\end{definition}

\begin{definition} (c.f.\cite{CG}) \label{9A} Two sets $ E,F $ in a bispace $ (X,\kappa_1, \kappa_2) $ are said to be semi $ \kappa_i $-separated if there are two $ s\kappa_i $-open sets $ G, H $ such that $ E\subset G, F\subset H $ and $ E\cap H=F\cap G=\emptyset; i=1,2 $.
\end{definition}

\begin{definition} (c.f.\cite{CG})
$ (X,\kappa_1, \kappa_2) $ is said to be pairwise semi-door bispace if every subset of it is not $ s\kappa_i $-closed then it is $ s\kappa_j $-open; $ i, j=1, 2; i\not=j $.
\end{definition}

\begin{theorem}(c.f.\cite{RD})\label{6A}.
Let $ (X,\kappa_1, \kappa_2) $ be a bispace, then $ s\kappa_i $-closure of a set is $ s\kappa_i $-closed if and only if arbitrary intersection of $ \kappa_i $-closed sets is $ s\kappa_i $-closed in $ X $; $ i=1,2 $.
\end{theorem}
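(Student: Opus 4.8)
The plan is to prove the two implications separately, leaning on three facts about the $\sigma$-space $(X,\kappa_i)$ that hold with no extra hypothesis. First, for any $B$ the $\kappa_i$-closure coincides with the intersection of all $\kappa_i$-closed supersets, $\overline{B_{\kappa_i}}=\bigcap\{C:C\text{ is }\kappa_i\text{-closed},\ B\subseteq C\}$; this follows from the adherent-point description, since a point lies outside some closed superset exactly when it has an open neighbourhood missing $B$. Second, rewriting Definition \ref{3} by complementation, a set $A$ is $s\kappa_i$-closed if and only if there is a \emph{genuine} $\kappa_i$-closed set $F$ with $\mathrm{int}_{\kappa_i}(F)\subseteq A\subseteq F$, where $\mathrm{int}_{\kappa_i}(F)=X-\overline{(X-F)_{\kappa_i}}$. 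Third, one has the sandwich $\mathrm{int}_{\kappa_i}(\overline{B_{\kappa_i}})\subseteq\overline{sB_{\kappa_i}}\subseteq\overline{B_{\kappa_i}}$: the right inclusion is immediate because every $\kappa_i$-open set is $s\kappa_i$-open, and the left one is checked directly by taking $x$ in an open set $O\subseteq\overline{B_{\kappa_i}}$ and, for an arbitrary $s\kappa_i$-open neighbourhood $U$ with $H\subseteq U\subseteq\overline{H_{\kappa_i}}$ ($H$ being $\kappa_i$-open), producing a point of $O\cap H$ whose open neighbourhood $H$ must then meet $B$.

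For the implication that condition (A), every $\overline{sB_{\kappa_i}}$ is $s\kappa_i$-closed, forces condition (B), every intersection of $\kappa_i$-closed sets is $s\kappa_i$-closed, I would take a family $\{F_\alpha\}$ of $\kappa_i$-closed sets and set $D=\bigcap_\alpha F_\alpha$. By the first fact $\overline{D_{\kappa_i}}=D$, so the right-hand sandwich inclusion gives $\overline{sD_{\kappa_i}}\subseteq\overline{D_{\kappa_i}}=D\subseteq\overline{sD_{\kappa_i}}$, whence $\overline{sD_{\kappa_i}}=D$. Hypothesis (A) now says that $D=\overline{sD_{\kappa_i}}$ is $s\kappa_i$-closed, which is precisely (B).

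For the converse, assume (B) and let $B$ be arbitrary. Since $\overline{B_{\kappa_i}}$ is an intersection of $\kappa_i$-closed sets by the first fact, (B) makes it $s\kappa_i$-closed, so by the second fact there is a $\kappa_i$-closed $F$ with $\mathrm{int}_{\kappa_i}(F)\subseteq\overline{B_{\kappa_i}}\subseteq F$. The upper bound of the sandwich yields $\overline{sB_{\kappa_i}}\subseteq\overline{B_{\kappa_i}}\subseteq F$. For the lower bound I would apply $\mathrm{int}_{\kappa_i}$ to $\mathrm{int}_{\kappa_i}(F)\subseteq\overline{B_{\kappa_i}}$ and use that $\mathrm{int}_{\kappa_i}$ is monotone and idempotent, obtaining $\mathrm{int}_{\kappa_i}(F)=\mathrm{int}_{\kappa_i}(\mathrm{int}_{\kappa_i}(F))\subseteq\mathrm{int}_{\kappa_i}(\overline{B_{\kappa_i}})\subseteq\overline{sB_{\kappa_i}}$. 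Thus $\mathrm{int}_{\kappa_i}(F)\subseteq\overline{sB_{\kappa_i}}\subseteq F$ with $F$ a $\kappa_i$-closed set, so the second fact declares $\overline{sB_{\kappa_i}}$ to be $s\kappa_i$-closed, establishing (A).

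The delicate point throughout is that in a $\sigma$-space neither the closure of a set need be a closed set nor the interior of a set an open set, so the familiar two-sided criterion ``$A$ is semi-closed iff $\mathrm{int}(\mathrm{cl}(A))\subseteq A$'' is unavailable. The argument sidesteps this in two ways: it realises $\overline{B_{\kappa_i}}$ as a bona fide intersection of closed sets so that hypothesis (B) applies to it, and, crucially, it does \emph{not} use $\overline{B_{\kappa_i}}$ itself as the sandwiching closed set (which it is not, in general) but extracts a genuine $\kappa_i$-closed $F$ from the $s\kappa_i$-closedness of $\overline{B_{\kappa_i}}$, transporting the lower bound to $\overline{sB_{\kappa_i}}$ via idempotency of the interior operator. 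Verifying that this interior operator is indeed monotone and idempotent, and that the left inclusion of the sandwich survives when interiors fail to be open, is where the main care is needed; both reduce to the neighbourhood definitions and present no genuine obstruction once phrased in those terms.
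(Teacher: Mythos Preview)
The paper does not actually supply a proof of this theorem: it appears in the Preliminaries with the citation ``(c.f.\ \cite{RD})'' and no argument, so there is no in-paper proof to compare against. What can be said is that your argument is correct and self-contained.

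Your three auxiliary facts all survive the passage from topological spaces to $\sigma$-spaces. In particular, the point you flag as delicate---idempotency of $\mathrm{int}_{\kappa_i}$ even when $\mathrm{int}_{\kappa_i}(F)$ need not be open---is fine: if $U$ is $\kappa_i$-open and $U\subseteq F$ then automatically $U\subseteq\mathrm{int}_{\kappa_i}(F)$ (each $y\in U$ has $U$ itself as witness), so $\mathrm{int}_{\kappa_i}(F)$ is the union of all open subsets of $F$, and hence $\mathrm{int}_{\kappa_i}(\mathrm{int}_{\kappa_i}(F))=\mathrm{int}_{\kappa_i}(F)$. The sandwich inclusion $\mathrm{int}_{\kappa_i}(\overline{B_{\kappa_i}})\subseteq\overline{sB_{\kappa_i}}$ is also verified exactly as you outline. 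Both directions of the equivalence then go through as written: for (A)$\Rightarrow$(B) the key observation $\overline{D_{\kappa_i}}=D$ when $D$ is an intersection of closed sets is immediate from the intersection description of closure, and for (B)$\Rightarrow$(A) your device of extracting a genuine $\kappa_i$-closed $F$ sandwiching $\overline{B_{\kappa_i}}$ and then transporting the lower bound via idempotency is exactly what is needed to avoid assuming $\overline{B_{\kappa_i}}$ is itself closed.
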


\section{\bf     $ (j-i)sg_\kappa^*$-closed sets  and pairwise semi-$ T_0 $, pairwise semi-$ T_1 $ axioms in bispaces}

In this section we introduce the ideas of $ (j-i)sg_\kappa^*$-closed sets and $ (j-i)sg_\kappa^*$-open sets in a bispace to investigate some topological properties of the sets and to corelate with some new separation axioms like pairwise semi-$ T_0 $, pairwise semi-$ T_1 $ and pairwise semi-$ R_0 $ axioms.

\begin{definition}\label{10}  A set $ B $ of a bispace $ (X, \kappa_1, \kappa_2) $ is said to be semi $g_{\kappa_i}^*$-closed with respect to $s \kappa_j $ denoted by $ (j-i)sg_\kappa^* $-closed if there is a $ s\kappa_i $-closed set  $F$ containing $ B $ such that $ F \subset O$ whenever $B\subset O$ and  $O$ is $ s\kappa_j $-open. $B$ is called semi $ g_{\kappa_i}^*$-open with respect to $s \kappa_j $ denoted by $ (j-i)sg_\kappa^* $-open if $ X - B $ is $ (j-i)sg_\kappa^*$-closed; $ i,j=1,2;i\not=j  $. 
\end{definition}

\begin{theorem}\label{11} A set $ B $ in a bispace $ (X, \kappa_1, \kappa_2) $ is $ (j-i)sg_\kappa^* $-closed if and only if there is a $s \kappa_i $-closed set $ F $ containing $ B $ such that $ F \subset  sker_j(B); i, j=1,2; i\not=j$. 
\end{theorem}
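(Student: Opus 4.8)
The plan is to prove both directions of the biconditional by unwinding the definition of $(j-i)sg_\kappa^*$-closedness (Definition~\ref{10}) and the definition of $sker_j(B)$ (Definition~\ref{9}), using the fact that $sker_j(B)$ is precisely the intersection of all $s\kappa_j$-open sets containing $B$. The central observation I would exploit is that a set $F$ satisfies ``$F\subset O$ for every $s\kappa_j$-open $O$ with $B\subset O$'' if and only if $F\subset sker_j(B)$, since the kernel is exactly that intersection.

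For the forward direction, I would suppose $B$ is $(j-i)sg_\kappa^*$-closed. By Definition~\ref{10} there exists an $s\kappa_i$-closed set $F$ with $B\subset F$ such that $F\subset O$ whenever $B\subset O$ and $O$ is $s\kappa_j$-open. Since this containment holds for \emph{every} $s\kappa_j$-open set $O$ containing $B$, the set $F$ is contained in the intersection of all such $O$, which is by definition $sker_j(B)$. Hence $F\subset sker_j(B)$, giving the required $s\kappa_i$-closed set. For the converse, I would assume there is an $s\kappa_i$-closed set $F$ with $B\subset F$ and $F\subset sker_j(B)$. Then for any $s\kappa_j$-open set $O$ with $B\subset O$, we have $sker_j(B)\subset O$ (the intersection is contained in each of its members), so $F\subset sker_j(B)\subset O$, which is exactly the condition in Definition~\ref{10}; thus $B$ is $(j-i)sg_\kappa^*$-closed.

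The only subtlety worth checking, and the step I would be most careful about, is the containment $B\subset F$ and the role of the hypothesis $B\subset sker_j(B)$. One must confirm that $B\subset sker_j(B)$ always holds (which is immediate, since $B$ is contained in every $s\kappa_j$-open set appearing in the intersection defining the kernel, and trivially $B\subset B$ is not needed—only that $B$ lies in each $O$), so that the chain $B\subset F\subset sker_j(B)$ is coherent and the two formulations genuinely coincide. I do not anticipate a serious obstacle here; the proof is essentially a reformulation, the entire content being the identification of the quantified containment ``$F\subset O$ for all admissible $O$'' with the single containment ``$F\subset sker_j(B)$.'' I would present it compactly as a two-paragraph if-and-only-if argument.
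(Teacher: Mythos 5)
Your proof is correct and follows essentially the same route as the paper's: the forward direction is the same one-line observation that a set contained in every $s\kappa_j$-open superset of $B$ is contained in their intersection $sker_j(B)$, and your converse simply spells out the step the paper dismisses as ``obvious.'' Nothing further is needed.
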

\begin{proof} Suppose $ B\subset X $ and $ B $  is  $ (j-i)sg_\kappa^*$-closed. Then there is a $ s\kappa_i $-closed set $ F $ containing $ B $ such that $ F \subset O$ whenever $B\subset O$ and  $O$ is $ s\kappa_j $-open. Therefore $ F \subset   sker_j(B); i, j=1,2; i\not=j $.

Converse part is obvious.
\end{proof}

\begin{theorem}\label{12}
A set $B$ in a bispace $ (X, \kappa_1, \kappa_2) $ is $ (j-i)sg_\kappa^* $-open if and only if there is a  $s {\kappa_i}$-open set $ V $  contained in $ B $ such that $ F\subset V $ whenever $ F $ is $ s\kappa_j $-closed and $ F\subset B $; $ i,j=1,2; i\not=j $.
\end{theorem}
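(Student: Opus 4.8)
The plan is to prove this by a straightforward complementation (duality) argument, relying only on Definition 10 together with the fact from Definition 3 that the complement of an $s\kappa_i$-open set is $s\kappa_i$-closed and conversely. By definition, $B$ is $(j-i)sg_\kappa^*$-open precisely when $X-B$ is $(j-i)sg_\kappa^*$-closed, so I would first unfold the latter via Definition 10: there is an $s\kappa_i$-closed set $E$ with $X-B\subset E$ such that $E\subset O$ whenever $X-B\subset O$ and $O$ is $s\kappa_j$-open.

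The first concrete step is to set $V=X-E$. Since $E$ is $s\kappa_i$-closed, $V$ is $s\kappa_i$-open, and from $X-B\subset E$ I obtain $V=X-E\subset B$, so $V$ is an $s\kappa_i$-open subset of $B$ as required. The second step is to record the complement correspondence on which the whole argument rests: as $O$ ranges over the $s\kappa_j$-open sets containing $X-B$, the complements $F=X-O$ range exactly over the $s\kappa_j$-closed subsets of $B$, because $X-B\subset O$ is equivalent to $X-O\subset B$.

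Finally I would translate the defining containment through this correspondence. Given any $s\kappa_j$-closed $F\subset B$, put $O=X-F$; then $O$ is $s\kappa_j$-open and contains $X-B$, so the unfolded hypothesis gives $E\subset O$, and taking complements yields $F\subset X-E=V$, which is precisely the stated condition. Each implication is reversible: starting from an $s\kappa_i$-open $V\subset B$ satisfying $F\subset V$ for all $s\kappa_j$-closed $F\subset B$, one sets $E=X-V$ and runs the same chain of complements to recover that $X-B$ is $(j-i)sg_\kappa^*$-closed, i.e. that $B$ is $(j-i)sg_\kappa^*$-open.

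I expect no genuine mathematical obstacle here; the only point demanding care is the bookkeeping of the two indices $i$ and $j$ alongside the repeated passage to complements, ensuring that $s\kappa_i$-closedness dualises to $s\kappa_i$-openness while the $s\kappa_j$-open supersets of $X-B$ dualise to the $s\kappa_j$-closed subsets of $B$, with the roles of $i$ and $j$ never interchanged.
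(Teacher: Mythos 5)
Your proposal is correct and follows essentially the same route as the paper: both proofs unfold the definition of $(j-i)sg_\kappa^*$-open via $X-B$ being $(j-i)sg_\kappa^*$-closed and then pass to complements, with $V=X-E$ playing the role of the paper's $F^c$. The complementation bookkeeping is handled cleanly and the indices $i,j$ are kept straight throughout, so nothing further is needed.
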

\begin{proof}
Let $ B $ be a $ (j-i)sg_\kappa^* $-open set; $ i,j=1,2; i\not=j $. Then by Definition \ref{10}, $ X-B $ is a $ (j-i)sg_\kappa^* $-closed set and hence there is a  $ s\kappa_i $-closed set $ F $ containing  $ B^ c $ such that $ F\subset U $ whenever $ B^c\subset U $ and $ U $ is $ s\kappa_j $-open. Therefore, $ F^c\subset B $ such that $ F^c \supset U^c $ whenever $ B\supset U^c $, where $ U^c $ is a $ s\kappa_j $-closed set and $ F^c $ is a $ s\kappa_i $-open set. Hence we can say that $ B $ is $ (j-i)sg_\kappa^* $-open set if and only if there exists a $ s\kappa_i $-open set $ F^c=V, V\subset B $ such that $ F^c=V\supset P=U^c $ whenever $ P\subset B $ and $ P $ is a $ s\kappa_j $-closed.  
\end{proof}

\begin{remark}\label{13}
Every $ s\kappa_i $-closed set $ D $ in a bispace $ (X,\kappa_1, \kappa_2) $ is  $(j-i)sg_\kappa^*$-closed; $ i,j=1,2; i\not=j $ but Example \ref{14}  shows that the converse may not be true; however, it is true if in addition $ D $ is semi $ \kappa_j $-open. Again if a set $ D=sker_j(D) $ then it is $(j-i)sg_\kappa^*$-closed if and only if it is $ s\kappa_i $-closed. 
\end{remark}

\begin{example}\label{14} Suppose $ X=R-Q,  \kappa_1=\{X, \emptyset, \sqrt{\{3\}}\cup G_i\},  \kappa_2=\{X, \emptyset, G_i\}, G_i $ runs over all countable subsets of $ X $.  Then $ (X, \kappa_1, \kappa_2) $ is a bispace but not a bitopological space. Incidentally the bispace is pairwise semi-$ T_1 $ and also pairwise semi-$ T_0 $. Suppose $ A $ is the set of all irrational numbers of $ (1, 2) $. Then $ A $ is not $ s\kappa_1 $-closed as $ X-A $ is not $ s\kappa_1 $-open (for $ \sqrt{3}\in (1, 2) $) but $ A $ is  $ (2-1)sg_\kappa^*$-closed  since $ X $ is the only $ s\kappa_2 $-open set containing $ A $.
\end{example}

\begin{theorem}\label{15}  If a set $B$ of a bispace $ (X, \kappa_1, \kappa_2) $ is $ (j-i)sg_\kappa^* $-closed; $ i,j=1,2; i\not=j$  then there exists a $ s\kappa_i $-closed set $F$ containing $B$ such that $ F - B $ does not contain any non-empty $ s\kappa_j $-closed set; $ i,j=1,2; i\not=j $. 
\end{theorem}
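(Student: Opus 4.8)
The plan is to show that the very $ s\kappa_i $-closed set $ F $ handed to us by the definition of a $ (j-i)sg_\kappa^* $-closed set already satisfies the conclusion, and to establish the emptiness claim by contradiction. First I would invoke Definition \ref{10}: since $ B $ is $ (j-i)sg_\kappa^* $-closed, there is a $ s\kappa_i $-closed set $ F $ with $ B\subset F $ and with the property that $ F\subset O $ for every $ s\kappa_j $-open set $ O $ satisfying $ B\subset O $. This is exactly the set $ F $ whose existence the theorem asserts, so the entire task reduces to verifying that $ F-B $ contains no non-empty $ s\kappa_j $-closed set.

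To verify this, I would suppose for contradiction that $ C $ is a non-empty $ s\kappa_j $-closed subset of $ F-B $. Then $ C\subset F $ and $ C\cap B=\emptyset $, and the latter gives $ B\subset X-C $. The one point that needs care here is the matching of the indices: because $ C $ is $ s\kappa_j $-closed, its complement $ X-C $ is a $ s\kappa_j $-open set, which is precisely the kind of open set featuring in Definition \ref{10}. Thus $ X-C $ is a $ s\kappa_j $-open set containing $ B $.

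Applying the defining property of $ F $ with $ O=X-C $ then yields $ F\subset X-C $, i.e. $ F\cap C=\emptyset $. Since also $ C\subset F $, we obtain $ C=C\cap F=\emptyset $, contradicting the non-emptiness of $ C $. Hence no such $ C $ exists, so $ F-B $ contains no non-empty $ s\kappa_j $-closed set, which is the assertion of the theorem. The argument is a direct bispace analogue of the classical fact that, for a generalized-closed set $ A $ in a topological space, $ \overline{sA}-A $ contains no non-empty semi-closed set; the only genuinely new ingredient is the careful tracking of which of the two structures $ \kappa_i, \kappa_j $ supplies the closed set and which supplies the open set, and I do not anticipate any substantive obstacle beyond this bookkeeping.
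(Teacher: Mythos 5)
Your proof is correct and follows essentially the same route as the paper's: both take the $s\kappa_i$-closed set $F$ supplied by Definition \ref{10}, observe that the complement of a putative non-empty $s\kappa_j$-closed subset $P$ of $F-B$ is a $s\kappa_j$-open set containing $B$, and conclude $F\subset X-P$, forcing $P=\emptyset$. The only cosmetic difference is that the paper phrases the defining property of $F$ via $sker_j(B)$ (Theorem \ref{11}), which changes nothing of substance.
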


\begin{proof} Let $B$ be a $ (j-i)sg_\kappa^* $-closed set of a bispace $ (X, \kappa_1, \kappa_2) $; $ i,j=1,2; i\not=j$. Then there exists a $ s\kappa_i $-closed set $F$ containing $B$ such that $ F \subset  sker_j(B) $. Let $ P $ be a $ s\kappa_j $-closed set such that $ P\subset F-B $. Then $ X-P $ is $ s\kappa_j $-open and $ B\subset X- P \Rightarrow F\subset X-P \Rightarrow P\subset X-F $. So $ P\subset (X-F)\cap F=\emptyset $. Hence the result follows. 
\end{proof}

But the reverse implication in Theorem \ref{15} is not true as seen from Example \ref{16} although it is true in a space \cite{RD}.

\begin{example}\label{16}
Suppose  $ X = \{a, b, c\}, \kappa_1=\{\emptyset, X, \{a\}, \{a, b\}\}$ and $ \kappa_2=\{\emptyset, X, \{b\}, \{b, c\}\}$.  Then $ (X, \kappa_1, \kappa_2) $ is a bitopological space so a bispace.  Consider the set $ \{b\} $ which is contained in $ s\kappa_1 $-closed set $ \{b,c\} $, so $ \{b,c\}-\{b\}=\{c\}  $ does not contain any non-empty $ s\kappa_2 $-closed set. Since $ \{b\} $ itself is a $ s\kappa_2 $-open set, there is no $ s\kappa_1 $-closed set containing $ \{b\} $ contained in $ \{b\} $; and hence $ \{b\} $ is not $ (2-1)sg_\kappa^* $-closed.  
\end{example}

\begin{theorem}\label{17}
Suppose $ D $ is a $ (i-j)sg_\kappa^* $-closed set in a bispace $ (X, \kappa_1, \kappa_2) $; $ i,j=1,2; i\not=j $, then $ D $ is $ s\kappa_j $-closed if and only if $ \overline{sD_{\kappa_j}} $ and $ \overline{sD_{\kappa_j}}-D $ are $ s\kappa_j $-closed. 
\end{theorem}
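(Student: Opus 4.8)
The plan is to prove the two implications separately, with the forward direction being essentially formal and the reverse direction carrying the real content.

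For the forward implication I would assume $D$ is $s\kappa_j$-closed and show both sets reduce to trivially closed ones. By Theorem \ref{6} we have $\overline{sD_{\kappa_j}} = D \cup sD_{\kappa_j}'$, and since $D$ is $s\kappa_j$-closed it equals its own semi $\kappa_j$-closure, forcing $sD_{\kappa_j}' \subseteq D$ and hence $\overline{sD_{\kappa_j}} = D$. Thus $\overline{sD_{\kappa_j}}$ is $s\kappa_j$-closed (it is $D$), and $\overline{sD_{\kappa_j}} - D = \emptyset$ is trivially $s\kappa_j$-closed. I note that this direction does not use the hypothesis that $D$ is $(i-j)sg_\kappa^*$-closed at all.

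For the reverse implication, assume $\overline{sD_{\kappa_j}}$ and $\overline{sD_{\kappa_j}} - D$ are $s\kappa_j$-closed and write $C = \overline{sD_{\kappa_j}}$; the goal is to show $C - D = \emptyset$, since then $C = D$ is $s\kappa_j$-closed. Because $C$ is now a $s\kappa_j$-closed set containing $D$, it is the smallest such set, so it may serve as a witness for the $(i-j)sg_\kappa^*$-closedness of $D$: applying Theorem \ref{11} with the roles of $i$ and $j$ interchanged produces a $s\kappa_j$-closed set contained in $sker_i(D)$, and minimality of $C$ gives $C \subseteq sker_i(D)$. I would then rerun the mechanism of Theorem \ref{15}: if $P$ is any nonempty $s\kappa_i$-closed subset of $C - D$, then $X - P$ is a $s\kappa_i$-open set containing $D$ (as $P \cap D = \emptyset$), so $C \subseteq sker_i(D) \subseteq X - P$, whence $P \subseteq C \cap (X - C) = \emptyset$. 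Consequently $C - D$ contains no nonempty $s\kappa_i$-closed set.

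The concluding step is to pass from this to $C - D = \emptyset$, and this is the step I expect to be the main obstacle. The difficulty is one of index bookkeeping: the hypothesis guarantees that $C - D$ is $s\kappa_j$-closed, whereas the argument above only annihilates nonempty $s\kappa_i$-closed subsets of $C - D$. To close the gap one must reconcile the two semi-topologies, arranging that the ``extra'' part $C - D$ of the semi $\kappa_j$-closure is of precisely the type of semi-closed set that Theorem \ref{15} kills, so that $C - D$, viewed as a semi-closed subset of itself, is forced to vanish. I would try to exploit both hypotheses at once — using $C$ and $C - D$ both $s\kappa_j$-closed to write $D = C \cap (X - (C - D))$ — and to track the exact interplay in Definition \ref{10} between $sker_i$ and the $s\kappa_j$-closure; this is where any genuine difficulty, or any implicit additional hypothesis, in the statement resides.
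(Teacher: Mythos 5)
Your argument tracks the paper's own proof almost line for line. The forward direction is the same trivial computation. In the reverse direction the paper likewise produces (by citing Theorem \ref{15} with the indices interchanged, rather than your rederivation through Theorem \ref{11} and $sker_i(D)$, which is correct but redundant) a $s\kappa_j$-closed set $P\supset D$ such that $P-D$ contains no non-empty $s\kappa_i$-closed set, and notes $\overline{sD_{\kappa_j}}-D\subset P-D$. The one step you decline to take --- concluding $\overline{sD_{\kappa_j}}-D=\emptyset$ --- is exactly the step the paper does take, and it takes it without comment: it writes $\overline{sD_{\kappa_j}}-D\subset P-D\Rightarrow\overline{sD_{\kappa_j}}-D=\emptyset$, an inference that is only licensed if $\overline{sD_{\kappa_j}}-D$ is a $s\kappa_i$-closed subset of $P-D$, whereas the stated hypothesis only makes it $s\kappa_j$-closed. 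So your diagnosis of the index mismatch is accurate, and the paper supplies no further idea that bridges it.

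As written, then, your proposal is an incomplete proof of the theorem as literally stated, but the missing step is not one you could have recovered from the paper: the paper's proof silently commits the very index substitution you refuse to make. The natural repair, and almost certainly the intended reading, is that the difference set should be required to be $s\kappa_i$-closed (hypothesis: $\overline{sD_{\kappa_j}}$ is $s\kappa_j$-closed and $\overline{sD_{\kappa_j}}-D$ is $s\kappa_i$-closed); with that emendation both your argument and the paper's close immediately, since $\overline{sD_{\kappa_j}}-D$ is then a $s\kappa_i$-closed subset of $P-D$ and hence empty. Your proposed workaround via $D=\overline{sD_{\kappa_j}}\cap\bigl(X-(\overline{sD_{\kappa_j}}-D)\bigr)$ will not rescue the statement as given: it exhibits $D$ only as the intersection of a $s\kappa_j$-closed set with a $s\kappa_j$-open set, and nothing in the paper converts such a ``locally semi-closed'' representation into $s\kappa_j$-closedness of $D$.
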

\begin{proof}
Let the $ (i-j)sg_\kappa^* $-closed set $ D $ be a $ s\kappa_j $-closed set in a bispace $ (X, \kappa_1, \kappa_2) $ then $ \overline{sD_{\kappa_j}}=D $ is $ s\kappa_j $-closed and  $ \overline{sD_{\kappa_j}}-D=\emptyset $ is $ s\kappa_j $-closed; $ i,j=1,2; i\not=j $.

Conversely, let $ \overline{sD_{\kappa_j}} $ and $ \overline{sD_{\kappa_j}}-D $ be $ s\kappa_j $-closed. Since $ D $ is $ (i-j)sg_\kappa^* $-closed, then there is a $ s\kappa_j $-closed set $ P, P\supset D $ such that $ P-D $ does not contain any non-empty $ s\kappa_i $-closed set. So $ \overline{sD_{\kappa_j}}-D\subset P-D\Rightarrow  \overline{sD_{\kappa_j}}-D=\emptyset\Rightarrow  \overline{sD_{\kappa_j}}=D\Rightarrow D $ is $ s\kappa_j $-closed.
\end{proof}

Intersection and union two $ (j-i)sg_\kappa^* $-closed sets in a bispace $ (X, \kappa_1, \kappa_2) $ are not in general a  $ (j-i)sg_\kappa^* $-closed;  $ i,j=1,2; i\not=j $ as shown by the Examples \ref{18} (i) and \ref{18} (ii). However, Theorems \ref{20}and \ref{22} show that these are true under certain additional conditions.

\begin{example}\label{18} (i) : 
Suppose  $ X = \{a, b, c\},  \kappa_1=\{\emptyset,X,  \{a, b\}\}$ and $ \kappa_2=\{\emptyset,X,\{b\}\}$.  Then $ (Y, \kappa_1, \kappa_2) $ is a bitopological space so a bispace. Now a family of $ s\kappa_1$-open sets is  $\{\emptyset,X,  \{a, b\}\} $ and a family of $  s\kappa_2$-open sets is  $\{\emptyset,X,  \{b\}\} $.   Then clearly the sets $\{a, b\} $ and $ \{b,c\} $ are $ (2-1)sg_\kappa^* $-closed sets but $ \{a, b\} \cap \{b,c\}=\{b\} $ is not $ (2-1)sg_\kappa^* $-closed set.

(ii): 
Suppose $ X=R-Q $ and $ \kappa_1 = \{X, \emptyset, G_i\cup \{\sqrt{3} \}$ where $ G_i $s are countable subsets of $ X $ and $ \kappa_2 = \{X, \emptyset, G_i \}$ where $ G_i $s are countable subsets of $ X- \{\sqrt{2}\} $. Then $ (X, \kappa_1,\kappa_2) $ is a bispace, but not a bitopological space. Suppose $ A = X - \{\sqrt{2}, \sqrt{3}, \sqrt{5}\} $ and $ B = X - \{\sqrt{2}, \sqrt{7}, \sqrt{11}\} $. Then $ X- A $ is $ s\kappa_2 $-open since $ \{\sqrt{3}, \sqrt{5}\} $ is a $ \kappa_2 $-open set and $ \{\sqrt{3}, \sqrt{5}\}  \subset X - A \subset \overline{\{\sqrt{3}, \sqrt{5}\}} $. Similarly, $ X - B $ is also $ s\kappa_2 $-open. Therefore $ A $ and $ B $ are $ s\kappa_2 $-closed sets and hence $ A $ and $ B $ are $ (1-2)sg_\kappa^* $-closed sets.  Again take $ C = A \cup B = (X - \{\sqrt{2}, \sqrt{3}, \sqrt{5}\})\cup (X - \{\sqrt{2}, \sqrt{7}, \sqrt{11}\}) = X - \{ \sqrt{2}\} $. Since $ \{\sqrt{2}\}$ is not  $ s\kappa_2 $-open, then $ X - \{\sqrt{2}\} $ is not $ s\kappa_2 $-closed, so $ X $ is the only $ s\kappa_2 $-closed set containing $ C $. But $ C $ is a $ s\kappa_1 $-open set. Hence by Definition \ref{10}, $ C $ is not  a $ (1-2)sg_\kappa^* $-closed  set.  
\end{example}

\begin{theorem} \label{20}  Union of two $ (j-i)sg_\kappa^* $-closed sets in a bispace $(X,\kappa_1, \kappa_2) $ is $ (j-i)sg_\kappa^* $-closed if union of two $ s\kappa_i $-closed sets is $ (j-i)sg_\kappa^* $-closed set;  $ i,j=1,2; i\not=j $. \end{theorem}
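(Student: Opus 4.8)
The plan is to unwind Definition \ref{10} for each of the two given sets, take the union of their witnessing $s\kappa_i$-closed sets, and then feed that union into the standing hypothesis to manufacture a single $s\kappa_i$-closed set that serves as a witness for the union. Concretely, let $A$ and $B$ be $(j-i)sg_\kappa^*$-closed sets. By Definition \ref{10} there is a $s\kappa_i$-closed set $F_1 \supset A$ with $F_1 \subset O$ whenever $A \subset O$ and $O$ is $s\kappa_j$-open, and similarly a $s\kappa_i$-closed set $F_2 \supset B$ with the analogous property relative to $B$.

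The first key step is to observe that $F_1 \cup F_2$ already behaves correctly with respect to $s\kappa_j$-open sets containing $A \cup B$: if $O$ is $s\kappa_j$-open and $A \cup B \subset O$, then $A \subset O$ and $B \subset O$, so $F_1 \subset O$ and $F_2 \subset O$, whence $F_1 \cup F_2 \subset O$. The only obstruction is that $F_1 \cup F_2$ need not itself be $s\kappa_i$-closed, since the class of $s\kappa_i$-closed sets is not in general closed under finite union.

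This is precisely the gap the hypothesis is designed to close. Because $F_1$ and $F_2$ are $s\kappa_i$-closed, the assumed condition yields that $F_1 \cup F_2$ is $(j-i)sg_\kappa^*$-closed, so by Definition \ref{10} there is a $s\kappa_i$-closed set $G \supset F_1 \cup F_2$ with $G \subset O'$ whenever $F_1 \cup F_2 \subset O'$ and $O'$ is $s\kappa_j$-open. I would then verify that this $G$ is the required witness for $A \cup B$: it is $s\kappa_i$-closed and contains $A \cup B$ (as $A \cup B \subset F_1 \cup F_2 \subset G$), and for any $s\kappa_j$-open $O \supset A \cup B$ the computation of the previous paragraph gives $F_1 \cup F_2 \subset O$, hence $G \subset O$. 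By Definition \ref{10} this exhibits $A \cup B$ as $(j-i)sg_\kappa^*$-closed.

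The argument is essentially a chain of definition-chasing, so no single step is technically demanding; the only real content is recognizing that the failure of $s\kappa_i$-closed sets to be closed under finite union is exactly what the hypothesis repairs, and that one may compose the two absorption properties (of $A$ and $B$ into $F_1 \cup F_2$, and of $F_1 \cup F_2$ into $G$) so that the witness $G$ lands inside every $s\kappa_j$-open neighbourhood of $A \cup B$. I expect the one thing to watch is keeping the roles of $i$ and $j$ straight throughout, since the notion is asymmetric in the two indices and the hypothesis must be invoked for the same pair $(j,i)$ as in the conclusion.
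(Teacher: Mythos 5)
Your proposal is correct and follows essentially the same route as the paper's own proof: take the $s\kappa_i$-closed witnesses $P,Q$ for the two sets, apply the hypothesis to $P\cup Q$ to obtain a single $s\kappa_i$-closed witness $K$, and check that $K$ works for the union. Your write-up is in fact slightly more careful about the quantifier order (choosing the witnesses before fixing the $s\kappa_j$-open set $O$), which is how the definition should be read; nothing is missing.
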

\begin{proof}
Let $ E, F $ be two $ (j-i)sg_\kappa^* $-closed sets in a bispace $ (X,\kappa_1, \kappa_2);  i,j=1,2;i\not=j $. Assume $ E\cup F \subset G, G $ is $ s\kappa_j $-open. So $ E\subset G, F\subset G $ and there are $ s\kappa_i $-closed sets $ P,Q $ containing respectively $ E, F $ such that $ P\subset G, Q\subset G $. Hence  $ P\cup Q\supset E\cup F$ and $ P\cup Q \subset G $.  By our assumption $ P\cup Q $ is $ (j-i)sg_\kappa^* $-closed and so there is a $ s\kappa_i $-closed set $ K $ containing $ P\cup Q $ such that $ K\subset G\Rightarrow E\cup F\subset P\cup Q\subset K\subset G $. Hence the result follows.
\end{proof}

\begin{theorem} \label{22}    Union of two semi $ \kappa_j $-separated $ (j-i)sg_\kappa^* $-open sets in a bispace $(X,\kappa_1, \kappa_2) $ is $ (j-i)sg_\kappa^* $-open;  $ i,j=1,2; i\not=j  $.\end{theorem}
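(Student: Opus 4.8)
The plan is to verify the internal characterization of $(j-i)sg_\kappa^*$-openness supplied by Theorem \ref{12}: a set is $(j-i)sg_\kappa^*$-open exactly when it contains an $s\kappa_i$-open set that swallows every $s\kappa_j$-closed subset of it. Let $A$ and $B$ be the two semi $\kappa_j$-separated $(j-i)sg_\kappa^*$-open sets. First I would apply Theorem \ref{12} to each, obtaining $s\kappa_i$-open sets $V_A\subset A$ and $V_B\subset B$ such that every $s\kappa_j$-closed subset of $A$ lies in $V_A$ and every $s\kappa_j$-closed subset of $B$ lies in $V_B$. I would set $V=V_A\cup V_B\subset A\cup B$ and record that $V$ is again $s\kappa_i$-open: if $U,W$ are $\kappa_i$-open with $U\subset V_A\subset\overline{U_{\kappa_i}}$ and $W\subset V_B\subset\overline{W_{\kappa_i}}$, then $U\cup W$ is $\kappa_i$-open (a finite, hence countable, union of open sets in a $\sigma$-space), and since finite unions of $\kappa_i$-closed sets are $\kappa_i$-closed one has $\overline{U_{\kappa_i}}\cup\overline{W_{\kappa_i}}=\overline{(U\cup W)_{\kappa_i}}$; thus $U\cup W\subset V\subset\overline{(U\cup W)_{\kappa_i}}$, so $V$ is $s\kappa_i$-open.

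Next I would bring in the separation data of Definition \ref{9A}: there are $s\kappa_j$-open sets $G,H$ with $A\subset G$, $B\subset H$ and $A\cap H=B\cap G=\emptyset$. Let $P$ be an arbitrary $s\kappa_j$-closed set with $P\subset A\cup B$; by Theorem \ref{12} it suffices to prove $P\subset V$. The key is the decomposition $P=(P\cap(X-H))\cup(P\cap(X-G))$, which holds because $A\cap H=B\cap G=\emptyset$ force $(A\cup B)\cap G\cap H=\emptyset$, whence $P\subset X-(G\cap H)=(X-H)\cup(X-G)$.

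Finally I would show each piece is an $s\kappa_j$-closed set sitting inside the appropriate summand. Since $H$ is $s\kappa_j$-open, $X-H$ is $s\kappa_j$-closed, and an intersection of two $s\kappa_j$-closed sets is $s\kappa_j$-closed (dualizing the finite-union fact for $s\kappa_j$-open sets established above); hence $P\cap(X-H)$ is $s\kappa_j$-closed, while $A\cap H=\emptyset$ together with $B\subset H$ gives $(A\cup B)\cap(X-H)=A$, so $P\cap(X-H)\subset A$. The absorbing property of $V_A$ then yields $P\cap(X-H)\subset V_A$, and symmetrically $P\cap(X-G)\subset V_B$. Combining, $P\subset V_A\cup V_B=V$, and Theorem \ref{12} delivers that $A\cup B$ is $(j-i)sg_\kappa^*$-open.

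The hard part will be precisely this middle step. The semi $\kappa_j$-separation hypothesis is exactly what is needed to split an arbitrary $s\kappa_j$-closed subset $P$ of $A\cup B$ into an $A$-part and a $B$-part that are themselves $s\kappa_j$-closed, so that the two absorbing conditions may be invoked independently; without separation these pieces need not be $s\kappa_j$-closed and the two witnesses $V_A,V_B$ cannot be glued, which is consistent with the failure exhibited in Example \ref{18}.
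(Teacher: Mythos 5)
Your proposal is correct and follows essentially the same route as the paper's proof: apply Theorem \ref{12} to get absorbing $s\kappa_i$-open witnesses inside each summand, take their union, and use the complements of the separating $s\kappa_j$-open sets to split an arbitrary $s\kappa_j$-closed subset of $A\cup B$ into two $s\kappa_j$-closed pieces lying in $A$ and $B$ respectively. Your justification that the union of two $s\kappa_i$-open sets is $s\kappa_i$-open (and dually for intersections of $s\kappa_j$-closed sets) is a detail the paper merely asserts, so including it is a welcome addition rather than a deviation.
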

\begin{proof}
Suppose $ E_1,E_2 $ are two semi $ \kappa_j $-separated $ (j-i)sg_\kappa^* $-open sets; $ i,j=1,2;i\not=j $. Then for semi $ \kappa_j $-separated sets there are $ s\kappa_j $-open sets $ G_1, G_2 $ such that $ E_1\subset G_1, E_2\subset G_2 $ and $ E_1\cap G_2=E_2\cap G_1=\emptyset $. Let $ D_1=G_1^c, D_2=G_2^c $. Then $ D_1, D_2 $ are $ s\kappa_j $-closed sets and $ E_1\subset D_2, E_2\subset D_1 $. Again if $ P_n\subset E_n, n=1,2; P_n $ is  $ s\kappa_j $-closed set then for $ (j-i)sg_\kappa^* $-open sets, there is $ s\kappa_i $-open set $ U_n\subset E_n $ such that $ P_n\subset U_n $ by Theorem \ref{12}. Clearly, $ U_1\cup U_2 $ is a $ s\kappa_i $-open set and $ U_1\cup U_2\subset E_1\cup E_2 $.   Assume $ D $ is a $ s\kappa_j $-closed set and $ D\subset E_1\cup E_2 $.  Now $ D= D\cap (E_1\cup E_2 )=(D\cap E_1)\cup (D\cap E_2)\subset (D\cap D_2)\cup (D\cap D_1) $ where $(D\cap D_2)$ and $ (D\cap D_1)$ are  $ s\kappa_j $-closed sets. Further $ (D\cap D_1)\subset (E_1\cup E_2)\cap D_1= (D_1\cap E_1)\cup (D_1\cap E_2)=\emptyset\cup (D_1\cap E_2)\subset E_2$ and so $ (D\cap D_1)\subset U_2 $. Similarly, $ (D\cap D_2)\subset U_1 $. Then $ D\subset  U_1\cup U_2 $ and hence the result follows. 
\end{proof}

\begin{theorem}\label{61}
Let $ (X,\kappa_1, \kappa_2) $ be a bispace. If $ C $ is $ (j-i)sg_\kappa^*$-closed and $ C\subset D\subset \overline{sC_{\kappa_i}} $ then $ D $ is $ (j-i)sg_\kappa^*$-closed; $ i,j=1,2; i\not=j $.
\end{theorem}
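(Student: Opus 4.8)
The plan is to re-use, for $D$, the very $s\kappa_i$-closed set that certifies the $(j-i)sg_\kappa^*$-closedness of $C$. By Definition \ref{10} there is an $s\kappa_i$-closed set $F$ with $C\subset F$ such that $F\subset O$ whenever $C\subset O$ and $O$ is $s\kappa_j$-open; I would show that this same $F$ witnesses that $D$ is $(j-i)sg_\kappa^*$-closed, that is, that $D\subset F$ and that $F\subset O$ for every $s\kappa_j$-open set $O$ containing $D$. Once both are checked, Definition \ref{10} delivers the conclusion directly.

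The covering condition is immediate. If $O$ is $s\kappa_j$-open with $D\subset O$, then $C\subset D\subset O$ by hypothesis, so the defining property of $F$ gives $F\subset O$. Thus the only genuine point to settle is the inclusion $D\subset F$. For this I would invoke the hypothesis $D\subset\overline{sC_{\kappa_i}}$ and reduce the problem to showing $\overline{sC_{\kappa_i}}\subset F$.

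Here some care is needed because of the caveat recorded after Definition \ref{3}, namely that the $s$-closure of a set need not itself be semi-closed; in particular one may \emph{not} argue that $\overline{sC_{\kappa_i}}$ is the smallest $s\kappa_i$-closed set containing $C$. Instead I would rely on monotonicity of the semi $\kappa_i$-closure together with the fact that $F$, being $s\kappa_i$-closed, satisfies $\overline{sF_{\kappa_i}}=F$ (a semi-closed set contains all its semi $\kappa_i$-limit points). From $C\subset F$ one gets $sC_{\kappa_i}'\subset sF_{\kappa_i}'$, and then Theorem \ref{6} yields $\overline{sC_{\kappa_i}}=C\cup sC_{\kappa_i}'\subset F\cup sF_{\kappa_i}'=\overline{sF_{\kappa_i}}=F$. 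Combining with the hypothesis gives $D\subset\overline{sC_{\kappa_i}}\subset F$, as required.

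With $D\subset F$ and $F\subset O$ for every $s\kappa_j$-open $O\supset D$ both in hand, $F$ is an $s\kappa_i$-closed set containing $D$ fulfilling the defining clause, so $D$ is $(j-i)sg_\kappa^*$-closed. The main (and essentially only) obstacle is the step $\overline{sC_{\kappa_i}}\subset F$: it must be obtained from monotonicity of the closure operator and from the fact that $s$-closure acts as the identity on already semi-closed sets, rather than from any minimality property of $\overline{sC_{\kappa_i}}$, which can fail in this generalized setting. Everything else is a matter of transporting inclusions along $C\subset D$.
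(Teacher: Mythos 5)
Your proposal is correct and follows essentially the same route as the paper: both use the $s\kappa_i$-closed witness $F$ (the paper calls it $P$) for $C$, observe that any $s\kappa_j$-open $O\supset D$ also contains $C$ so $F\subset O$, and conclude via the inclusion $D\subset\overline{sC_{\kappa_i}}\subset F$. The only difference is that you spell out the justification of $\overline{sC_{\kappa_i}}\subset F$ (monotonicity of the derived set plus $\overline{sF_{\kappa_i}}=F$ for semi-closed $F$), which the paper asserts without comment; your added care here is warranted and does not change the argument.
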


\begin{proof}
Suppose $ C $ is a $ (j-i)sg_\kappa^*$-closed set in $ (X,\kappa_1, \kappa_2),   C\subset D\subset \overline{sC_{\kappa_i}} $ and $ D\subset U, U $ is $ s\kappa_j $-open; $ i,j=1,2; i\not=j $. Then $ C\subset U $ and hence  there is a $ s\kappa_i $-closed set $ P $ containing the set set $ C $ such that $ P\subset U $. Now $ C\subset P\Rightarrow \overline{sC_{\kappa_i}}\subset P \Rightarrow P\supset \overline{sC_{\kappa_i}}\supset D \Rightarrow  D $ is  $ (j-i)sg_\kappa^*$-closed.
\end{proof}

\begin{theorem}\label{62}
Let $ (X, \kappa_1, \kappa_2) $ be a bispace and $ C\subset B $ where $ B $ is $ s\kappa_j $-open and $ (j-i)sg_\kappa^* $-closed; $ i,j=1,2; i\not=j $. Then $ C $ is $ (j-i)sg_\kappa^* $-closed if and only if $ C $ is $ (j-i)sg_\kappa^* $-closed relative to $ B $.
\end{theorem}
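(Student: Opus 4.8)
The plan is to reduce everything to one structural observation about $B$ and then read off Definition \ref{10} in each of $X$ and the subspace $B$. First I would observe that $B$ is not merely $s\kappa_j$-open: since $B$ is $(j-i)sg_\kappa^*$-closed there is an $s\kappa_i$-closed set $G\supseteq B$ with $G\subseteq O$ for every $s\kappa_j$-open $O$ containing $B$; applying this to the particular choice $O=B$ (legitimate precisely because $B$ is itself $s\kappa_j$-open) forces $G\subseteq B$, whence $G=B$ and $B$ is $s\kappa_i$-closed. This is exactly the converse recorded in Remark \ref{13}. So throughout the argument $B$ may be treated as simultaneously $s\kappa_i$-closed and $s\kappa_j$-open, and this two-sided condition is the engine of the proof.

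Next I would record the two subspace correspondences that this double condition yields, phrased as short preliminary remarks. Because $B$ is $s\kappa_i$-closed, a subset $A\subseteq B$ is $s\kappa_i$-closed in $(B,\kappa_1|_B,\kappa_2|_B)$ if and only if it is $s\kappa_i$-closed in $X$: a trace $E\cap B$ of an $s\kappa_i$-closed set $E$ is then an intersection of two $s\kappa_i$-closed sets, hence $s\kappa_i$-closed in $X$. Because $B$ is $s\kappa_j$-open, the $s\kappa_j$-open subsets of $B$ are exactly the $s\kappa_j$-open subsets of $X$ lying inside $B$, and each trace $O\cap B$ of an $s\kappa_j$-open $O\subseteq X$ is $s\kappa_j$-open in $B$ (transitivity of semi-openness through the $s\kappa_j$-open set $B$). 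These are what allow the defining condition to be transported in either direction.

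For the forward implication, assume $C$ is $(j-i)sg_\kappa^*$-closed in $X$. By Theorem \ref{11} there is an $s\kappa_i$-closed set $F\supseteq C$ with $F\subseteq sker_j(C)$. I would take $F\cap B$ as the relative witness: it is $s\kappa_i$-closed in $B$ and contains $C$, and if $O'$ is $s\kappa_j$-open in $B$ with $C\subseteq O'$, then $O'$ is $s\kappa_j$-open in $X$, so $F\subseteq O'$ and therefore $F\cap B\subseteq O'$. Hence $C$ is $(j-i)sg_\kappa^*$-closed relative to $B$.

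For the converse, assume $C$ is $(j-i)sg_\kappa^*$-closed relative to $B$, with relative witness $F'$ that is $s\kappa_i$-closed in $B$, contains $C$, and satisfies $F'\subseteq O'$ for every $s\kappa_j$-open $O'\subseteq B$ containing $C$. The step I expect to be the main obstacle is that the witness for the absolute statement must be $s\kappa_i$-closed in all of $X$, not merely in $B$; this is exactly where the $s\kappa_i$-closedness of $B$ forced in the first paragraph is indispensable, since by the correspondence it upgrades $F'$ to an $s\kappa_i$-closed subset of $X$. Granting that, for any $s\kappa_j$-open $O\subseteq X$ with $C\subseteq O$ the trace $O\cap B$ is $s\kappa_j$-open in $B$ and contains $C$, so $F'\subseteq O\cap B\subseteq O$; as $O$ was arbitrary, $F'\subseteq sker_j(C)$, and Theorem \ref{11} yields that $C$ is $(j-i)sg_\kappa^*$-closed in $X$. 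The one delicate point I would verify with care is the behaviour of semi-$\kappa_i$-closed and semi-$\kappa_j$-open sets under passage to the subspace $B$, since it is there that the two-sided condition on $B$ is genuinely being used.
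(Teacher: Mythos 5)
Your argument is essentially the paper's own proof: both begin by upgrading $B$ to an $s\kappa_i$-closed set via Remark \ref{13}, then transport the witness set between $X$ and the subspace $B$ in each direction, using the $s\kappa_j$-openness of $B$ to pass $s\kappa_j$-open sets back and forth and the $s\kappa_i$-closedness of $B$ to upgrade the relative witness to an absolute one. The subspace technicalities you flag at the end are glossed over in the paper in exactly the same way, so there is no substantive difference.
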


\begin{proof}
Let $ (X, \kappa_1, \kappa_2) $ be a bispace and $ C\subset B $ where $ B $ is $ s\kappa_j $-open and $ (j-i)sg_\kappa^* $-closed; $ i,j=1,2; i\not=j $. Then by Remark \ref{13}, $ B $ is $ s\kappa_i $-closed. 
Let $ C $ be $ (j-i)sg_\kappa^*$-closed. Since $ B $ is $ s\kappa_j $-open and $ C\subset B $, there is a $ s\kappa_i $-closed set $ P_1 $ containing the set $ C $ such that $ P_1\subset B $ and so $ P_1 $ is  $ s\kappa_i $-closed in $ B $. Now let $ C\subset G $ where $ G $ is $ s\kappa_j $-open in $ B $.  Evidently $ G $ is $ s\kappa_j $-open in $ X $ and hence $ P_1\subset G\Rightarrow C $ is $ (j-i)sg_\kappa^*$-closed in $ B $.

Conversely, let $ C $ be $ (j-i)sg_\kappa^* $-closed relative to $ B $. Then there is a $ s\kappa_i $-closed set $ P_2 $ in $ B $ containing the set $ C $ such that $ P_2\subset G' $ where $ G' $ is $ s\kappa_j $-open in $ B $ containing the set $ C $. As $ B $ is $ s\kappa_i $-closed, $ P_2 $ is  $ s\kappa_i $-closed in $ X $. Let $ C\subset H,H $ is $ s\kappa_j $-open, then $ C\subset H\cap B $, a $ s\kappa_j $-open set in $ B $. Hence $ C\subset P_2\subset H\cap B\Rightarrow C\subset P_2\subset H\Rightarrow C $ is $ (j-i)sg_\kappa^* $-closed.
\end{proof}

\begin{corollary}\label{63}
Let $ B $ be $ s\kappa_j $-open and $ (j-i)sg_\kappa^* $-closed set in a bispace $ (X,\kappa_1,\kappa_2) $, then $ B\cap C $ is $ (j-i)sg_\kappa^* $-closed  if $ C $ is $ (j-i)sg_\kappa^* $-closed; $ i,j=1,2; i\not=j $.
\end{corollary}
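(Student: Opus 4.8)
The plan is to produce, for the set $B\cap C$, an explicit $s\kappa_i$-closed hull that lies inside the semi-$\kappa_j$-kernel of $B\cap C$, and then to invoke Theorem \ref{11}. Two features of the hypotheses drive everything. First, since $B$ is simultaneously $s\kappa_j$-open and $(j-i)sg_\kappa^*$-closed, Remark \ref{13} tells me that $B$ is itself $s\kappa_i$-closed. Second, $C$ being $(j-i)sg_\kappa^*$-closed gives, by Theorem \ref{11}, an $s\kappa_i$-closed set $P\supseteq C$ with $P\subseteq sker_j(C)$. So I already have two $s\kappa_i$-closed sets, $B$ and $P$, at my disposal. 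Alternatively, because $B\cap C\subseteq B$, I could first apply Theorem \ref{62} to reduce the whole question to showing that $B\cap C$ is $(j-i)sg_\kappa^*$-closed relative to $B$; I will keep this reformulation in reserve, since it is exactly the device that controls the troublesome step below.

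The candidate hull is $F:=B\cap P$. It contains $B\cap C$ because $C\subseteq P$. It is $s\kappa_i$-closed: in a $\sigma$-space a finite union of $s\kappa_i$-open sets is again $s\kappa_i$-open (if $A_1,A_2$ are $s\kappa_i$-open with $\kappa_i$-open witnesses $G_1,G_2$, then $G_1\cup G_2$ is $\kappa_i$-open and witnesses $A_1\cup A_2$), so by complementation a finite intersection of $s\kappa_i$-closed sets is $s\kappa_i$-closed. Thus $F$ is a legitimate $s\kappa_i$-closed set containing $B\cap C$, and by Theorem \ref{11} it only remains to check $F\subseteq sker_j(B\cap C)$, that is, that $B\cap P\subseteq O$ for every $s\kappa_j$-open $O$ with $B\cap C\subseteq O$.

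This last containment is where I expect the real work to be. The natural idea is to enlarge $O$ so that it captures all of $C$, not merely $B\cap C$: since $C\setminus B\subseteq X\setminus B$ while $C\cap B\subseteq O$, one has $C\subseteq O\cup(X\setminus B)$, and moreover $(O\cup(X\setminus B))\cap B=O\cap B\subseteq O$. If $V:=O\cup(X\setminus B)$ were $s\kappa_j$-open, then $P\subseteq sker_j(C)\subseteq V$ would force $B\cap P\subseteq V\cap B\subseteq O$, finishing the proof. The obstacle is precisely that $X\setminus B$ is only $s\kappa_j$-closed (because $B$ is $s\kappa_j$-open), so $V$ need not be $s\kappa_j$-open on its own; this is the single delicate point and the place where the hypothesis that $B$ is $s\kappa_j$-open must genuinely be used rather than merely quoted. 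My plan for clearing it is to route the kernel computation through the subspace $B$ via Theorem \ref{62}: there the admissible neighborhoods of $B\cap C$ are exactly the traces $U\cap B$ of $s\kappa_j$-open sets $U$ of $X$, the trace $P\cap B$ is $s\kappa_i$-closed in $B$, and the containment $P\cap B\subseteq U\cap B$ can be tested directly, after which Theorem \ref{62} transports the relative conclusion back to $X$. As a final check I would test the argument against Example \ref{18}(i), where the intersection of two $(j-i)sg_\kappa^*$-closed sets fails to be $(j-i)sg_\kappa^*$-closed precisely because neither factor is $s\kappa_j$-open, which confirms that the extra hypothesis on $B$ is doing exactly the work flagged above.
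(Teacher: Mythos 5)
Your setup is fine as far as it goes: $B$ is $s\kappa_i$-closed by Remark \ref{13}, the set $P$ exists by Theorem \ref{11}, and $F=B\cap P$ is an $s\kappa_i$-closed superset of $B\cap C$, so everything reduces to showing $B\cap P\subseteq sker_j(B\cap C)$. But that is exactly the step you never carry out, and the escape route you sketch does not close it. Passing to the subspace $B$ via Theorem \ref{62} changes nothing: the admissible $s\kappa_j$-neighbourhoods of $B\cap C$ in $B$ are traces $U\cap B$ with $U$ $s\kappa_j$-open in $X$, and such a $U$ still need not contain $C$, so the inclusion $P\subseteq sker_j(C)$ gives no control over whether $P\cap B\subseteq U\cap B$. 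The obstruction you correctly identified in $X$ (that $U\cup(X-B)$ is only a union of an $s\kappa_j$-open set with an $s\kappa_j$-closed one) reappears verbatim inside $B$; the phrase ``can be tested directly'' is carrying the entire proof and is unsubstantiated.

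The gap cannot be repaired, because the corollary as stated is false. Take $X=\{a,b,c\}$, $\kappa_1=\{\emptyset,X,\{c\},\{a,b\}\}$ and $\kappa_2=\{\emptyset,X,\{a\},\{b\},\{a,b\},\{a,c\}\}$; both are topologies, hence this is a bispace. One computes that the $s\kappa_1$-closed sets are exactly $\emptyset,X,\{c\},\{a,b\}$ and the $s\kappa_2$-open sets are exactly $\emptyset,X,\{a\},\{b\},\{a,b\},\{a,c\}$. With $i=1$, $j=2$: the set $B=\{a,b\}$ is $s\kappa_2$-open and $s\kappa_1$-closed, hence $(2-1)sg_\kappa^*$-closed; the set $C=\{b,c\}$ is $(2-1)sg_\kappa^*$-closed because $X$ is the only $s\kappa_2$-open set containing it; yet $B\cap C=\{b\}$ satisfies $sker_2(\{b\})=\{b\}$ while $\{b\}$ is not $s\kappa_1$-closed, so $\{b\}$ is not $(2-1)sg_\kappa^*$-closed. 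For comparison, the paper's own argument is the Levine-style relativization you held in reserve, but it applies Theorem \ref{62} with the subspace $C$, which is not assumed $s\kappa_j$-open, so it fails at the analogous point; the version of the corollary that this method actually proves requires $C$ to be $s\kappa_i$-closed (so that $B\cap C$ is $s\kappa_i$-closed in $B$), not merely $(j-i)sg_\kappa^*$-closed. Your instinct that the delicate point is where the hypothesis on $B$ must do real work was sound; the correct conclusion is that no hypothesis on $B$ alone can do it.
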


\begin{proof}
Obviously $ B $ is $ s\kappa_i $-closed, so $ B\cap C $ is $ s\kappa_i $-closed in $ C $ which implies that  $ B\cap C $ is $ (j-i)sg_\kappa^* $-closed  in $ C $. Then by Theorem \ref{62}, $ B\cap C $ is $ (j-i)sg_\kappa^* $-closed.
\end{proof}

\begin{theorem}\label{64}
If each subset of a bispace $ (X,\kappa_1,\kappa_2) $ is $ (j-i)sg_\kappa^* $-closed, then $ \kappa_i$-$s.c.(X)=\kappa_j$-$s.o.(X); i\not=j; i,j=1,2 $.
\end{theorem}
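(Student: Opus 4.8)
The plan is to establish the equality of the two families by proving the two inclusions $\kappa_j\text{-}s.o.(X)\subseteq \kappa_i\text{-}s.c.(X)$ and $\kappa_i\text{-}s.c.(X)\subseteq \kappa_j\text{-}s.o.(X)$ separately, exploiting the fact that the standing convention $i,j=1,2,\ i\neq j$ supplies the hypothesis for both orderings of the pair, so that every subset is simultaneously $(j-i)sg_\kappa^*$-closed and $(i-j)sg_\kappa^*$-closed.

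For the first inclusion I would start with an arbitrary $s\kappa_j$-open set $U$. Since $U$ is $(j-i)sg_\kappa^*$-closed by hypothesis, Definition \ref{10} furnishes a $s\kappa_i$-closed set $F$ with $U\subseteq F$ and with $F\subseteq O$ for every $s\kappa_j$-open $O\supseteq U$. The decisive move is that $U$ itself is an admissible choice of $O$; taking $O=U$ forces $F\subseteq U$, whence $F=U$ and $U$ is $s\kappa_i$-closed. This shows every $s\kappa_j$-open set lies in $\kappa_i\text{-}s.c.(X)$.

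For the reverse inclusion I would take an arbitrary $s\kappa_i$-closed set $A$ and pass to its complement $A^c$, which is $s\kappa_i$-open. Here the key point is to invoke the hypothesis in the opposite ordering: every subset, in particular $A^c$, is $(i-j)sg_\kappa^*$-closed, so Definition \ref{10} yields a $s\kappa_j$-closed set $F'$ with $A^c\subseteq F'$ and $F'\subseteq O$ for every $s\kappa_i$-open $O\supseteq A^c$. Since $A^c$ is $s\kappa_i$-open we may again take $O=A^c$, giving $F'=A^c$; thus $A^c$ is $s\kappa_j$-closed and hence $A$ is $s\kappa_j$-open. Combining the two inclusions gives $\kappa_i\text{-}s.c.(X)=\kappa_j\text{-}s.o.(X)$.

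The step I expect to be the genuine crux is the second inclusion, and specifically the realization that it cannot be obtained from the $(j-i)$-hypothesis alone: applying Definition \ref{10} directly to the $s\kappa_i$-closed set $A$ merely reproduces $A$ as its own $s\kappa_i$-closed witness and yields no new information. The argument only closes after taking complements, so that $A$ is replaced by the $s\kappa_i$-open set $A^c$, and then using the $(i-j)$-form of the hypothesis together with the same ``take $O$ equal to the set itself'' trick. The whole proof therefore hinges on reading $i,j=1,2,\ i\neq j$ as granting the hypothesis for both orderings, and a careful write-up should make this symmetric use explicit.
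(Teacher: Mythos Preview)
Your proof is correct and follows essentially the same route as the paper: the first inclusion is argued identically, and for the reverse inclusion the paper also passes to the complement $X-E\in\kappa_i\text{-}s.o.(X)$ and invokes ``the same argument'' with the roles of $i$ and $j$ interchanged, i.e., the $(i\!-\!j)$-form of the hypothesis you spell out. Your explicit observation that the symmetric reading of $i,j=1,2,\ i\neq j$ is essential is exactly what the paper's terse ``by same argument'' relies on.
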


\begin{proof}
Let each subset be $ (j-i)sg_\kappa^* $-closed and $ B\in \kappa_j$-$s.o.(X) $; $ i,j=1,2; i\not=j $. Then  $ B $ is $ (j-i)sg_\kappa^* $-closed, so there is a $ s\kappa_i $-closed set $ P $ containing the set $ B $ such that $ P\subset B $ which implies that $ P=B\Rightarrow B $ is $ s\kappa_i $-closed and so $ \kappa_j$-$s.o.(X)\subset \kappa_i$-$s.c.(X) $. Again let $ E\in \kappa_i$-$s.c.(X)\Rightarrow X-E\in \kappa_i$-$s.o.(X) $, then  by same argument, $ X-E $ is $ s\kappa_j $-closed and hence $ E $ is $ s\kappa_j $-open. So $ \kappa_i$-$s.c.(X)\subset \kappa_j$-$s.o.(X)\Rightarrow \kappa_i$-$s.c.(X)= \kappa_j$-$s.o.(X) $.
\end{proof}

The reverse implication of  above Theorem is not true as revealed from the Example \ref{65}. But the converse is true under some additional condition shown in the next Theorem \ref{66}.

\begin{example}\label{65} Suppose $ X=R-Q, \kappa_1=\kappa_2=\{\emptyset, X, G_n, D_n\} $, where $ G_n $ and $ D_n $ run over respectively all countable sets and cocountable sets of $ X $. Then  $ (X,\kappa_1,\kappa_2) $ is a bispace but not a bitopological space. Clearly, we have $ \kappa_i$-$s.c.(X)= \kappa_j$-$s.o.(X); i\not=j; i,j=1,2  $. Now consider the set $ D $ of all irrational numbers in $ (0, 2) $. Then $ D\not\in \kappa_i$-$s.c.(X)\Rightarrow D $ is not $ (j-i)sg_\kappa^* $-closed, by Remark \ref{13}, since $ D=sker_j(D) $.
\end{example}

\begin{theorem}\label{66}
Let $ (X,\kappa_1,\kappa_2) $ be a bispace with $ \kappa_i$-$s.c.(X)=\kappa_j$-$s.o.(X); i\not=j; i,j=1,2 $, then each subset is $ (j-i)sg_\kappa^* $-closed if and only if the bispace satisfies the condition (C). (C): Arbitrary intersection of $ \kappa_i $-closed sets is $ s\kappa_i $-closed in $ X $.
\end{theorem}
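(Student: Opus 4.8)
The plan is to prove the two implications separately, keeping the standing hypothesis $\kappa_i\text{-}s.c.(X)=\kappa_j\text{-}s.o.(X)$ in force throughout; this hypothesis says precisely that a set is $s\kappa_i$-closed exactly when it is $s\kappa_j$-open, and it is what will let me interchange $s\kappa_i$-closures, $s\kappa_j$-open neighbourhoods, and $j$-kernels. The single most useful auxiliary fact is Theorem \ref{6A}, which reformulates condition (C) into the statement that the $s\kappa_i$-closure $\overline{sB_{\kappa_i}}$ of every set $B$ is $s\kappa_i$-closed; I intend to pass between these two forms of (C) freely. I will also record once the elementary fact that, since semi-closure is monotone and fixes $s\kappa_i$-closed sets, $\overline{sB_{\kappa_i}}$ is contained in every $s\kappa_i$-closed set that contains $B$.

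For the forward implication (each subset $(j-i)sg_\kappa^*$-closed $\Rightarrow$ (C)), I would begin with an arbitrary family $\{C_\alpha\}$ of $\kappa_i$-closed sets and set $C=\bigcap_\alpha C_\alpha$, the aim being to show $C$ is $s\kappa_i$-closed. Each $C_\alpha$, being $\kappa_i$-closed, is $s\kappa_i$-closed, hence $s\kappa_j$-open by the standing hypothesis. Since every $C_\alpha$ is then an $s\kappa_j$-open set containing $C$, the defining family for $sker_j(C)$ includes all the $C_\alpha$, giving $sker_j(C)\subset\bigcap_\alpha C_\alpha=C$; together with the trivial inclusion $C\subset sker_j(C)$ this yields $C=sker_j(C)$. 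By hypothesis $C$ is $(j-i)sg_\kappa^*$-closed, so the last assertion of Remark \ref{13} forces $C$ to be $s\kappa_i$-closed, which is exactly condition (C).

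For the reverse implication ((C) $\Rightarrow$ each subset $(j-i)sg_\kappa^*$-closed), I would take an arbitrary $B\subset X$ and propose $F=\overline{sB_{\kappa_i}}$ as the witnessing set in Theorem \ref{11}. Condition (C) together with Theorem \ref{6A} makes $F$ an $s\kappa_i$-closed set containing $B$, so it remains only to verify $F\subset sker_j(B)$. To this end I take any $s\kappa_j$-open $U\supset B$; the standing hypothesis turns $U$ into an $s\kappa_i$-closed set containing $B$, whence $\overline{sB_{\kappa_i}}\subset U$ by the elementary fact above. Intersecting over all such $U$ gives $F\subset sker_j(B)$, and Theorem \ref{11} then shows $B$ is $(j-i)sg_\kappa^*$-closed.

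The step I expect to demand the most care is the clean matching of ``$\kappa_i$-closed'' (which appears in (C)) against ``$s\kappa_i$-closed'' (which appears in Definition \ref{10}): condition (C) speaks of genuine $\kappa_i$-closed sets, whereas kernels and semi-closures only ever produce $s\kappa_i$-closed sets, so Theorem \ref{6A} is indispensable for bridging this gap in the reverse direction, while the identity $C=sker_j(C)$ carries the forward direction. I would also be careful to justify explicitly that $\overline{sB_{\kappa_i}}$ lies inside every $s\kappa_i$-closed superset of $B$, since the preliminaries warn that semi-closures need not be semi-closed in a bispace; it is precisely this monotonicity that allows the single set $\overline{sB_{\kappa_i}}$ to serve simultaneously as the witness against all $s\kappa_j$-open neighbourhoods of $B$.
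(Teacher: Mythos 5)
Your proposal is correct and follows essentially the same route as the paper: the forward direction forces the witnessing $s\kappa_i$-closed set down onto $\bigcap C_\alpha$ because each $C_\alpha$ is $s\kappa_j$-open under the standing hypothesis (your detour through $sker_j(C)=C$ and Remark \ref{13} is just the paper's direct argument repackaged), and the reverse direction uses exactly the paper's witness $\overline{sB_{\kappa_i}}$, made $s\kappa_i$-closed via Theorem \ref{6A} and trapped inside every $s\kappa_j$-open superset by the hypothesis. Your explicit flag about distinguishing $\kappa_i$-closed from $s\kappa_i$-closed sets is a worthwhile clarification but does not change the argument.
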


\begin{proof}
Let $ (X,\kappa_1,\kappa_2) $ be a bispace with $ \kappa_i$-$s.c.(X)=\kappa_j$-$s.o.(X); i\not=j; i,j=1,2 $. 

Suppose that every subset is $ (j-i)sg_\kappa^* $-closed and $ P_n $ is an arbitrary collection of $ \kappa_i $-closed sets and $ P=\bigcap P_n $. Then $ P $ is $ (j-i)sg_\kappa^* $-closed. So there is a $ s\kappa_i $-closed set $ P'$ containing $ P $ such that $ P'\subset V $ whenever $ P\subset V, V $ is $ s\kappa_j $-open. Now since each $ P_n\in \kappa_i$-$s.c.(X)=\kappa_j$-$s.o.(X) $ and $ P\subset P_n $, it follows that $ P'\subset P_n $, for all $ n $ i.e. $ P'\subset \bigcap P_n=P $. Hence $ P'=P $ and the result follows. 

Conversely, let the condition (C) hold and $ D\subset X $. So $ \overline{(D)_{\kappa_i}} $ is $ s\kappa_i $-closed and hence by Theorem \ref{6A},  $ \overline{s(D)_{\kappa_i}}=P $ (say) is $ s\kappa_i $-closed. Let $ D\subset U,U $ is $ s\kappa_j $-open. Since $ \kappa_i$-$s.c.(X)=\kappa_j$-$s.o.(X); i\not=j; i,j=1,2; U $ is $ s\kappa_i $-closed. So $ P=\overline{s(D)_{\kappa_i}}\subset \overline{s(U)_{\kappa_i}}=U $. So $ D $ is $ (j-i)sg_\kappa^* $-closed.   
\end{proof}

\begin{theorem}\label{23} Suppose $(X,\kappa_1, \kappa_2) $ is a bispace. For each $ x \in X $, if $\{x\}$ is not $ s\kappa_i $-closed then $ \{x\}^c $ is $ (i-j)sg_\kappa^* $-closed; $ i,j=1,2; i\not=j $.
\end{theorem}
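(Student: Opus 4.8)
The plan is to apply Definition \ref{10} directly, after first translating it to the index pair occurring in the statement. To show that $\{x\}^c$ is $(i-j)sg_\kappa^*$-closed, I must exhibit a $s\kappa_j$-closed set $F$ with $\{x\}^c\subset F$ such that $F\subset O$ for every $s\kappa_i$-open set $O$ containing $\{x\}^c$. (Note that, relative to Definition \ref{10}, the label $(i-j)$ interchanges the roles played by $i$ and $j$ in the label $(j-i)$: the enveloping closed set is now $s\kappa_j$-closed and the test open sets are $s\kappa_i$-open.) One may equally well argue through the kernel characterization of Theorem \ref{11}, which in this setting asks for a $s\kappa_j$-closed $F$ with $\{x\}^c\subset F\subset sker_i(\{x\}^c)$.

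The key observation, and essentially the whole content of the proof, is that the hypothesis makes the family of admissible test sets trivial. If $O$ is any $s\kappa_i$-open set with $\{x\}^c\subset O$, then $O$ is a superset of a co-singleton, so either $O=X$ or $O=\{x\}^c$. The second alternative would say that $\{x\}^c$ is $s\kappa_i$-open, equivalently that $\{x\}$ is $s\kappa_i$-closed, contradicting the hypothesis. Hence $O=X$ is the only $s\kappa_i$-open superset of $\{x\}^c$, and correspondingly $sker_i(\{x\}^c)=X$.

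With this reduction I would simply take $F=X$. Since $\emptyset$ is $s\kappa_i$-open (witnessed by the $\kappa_i$-open set $\emptyset$, as $\emptyset\subset\emptyset\subset\overline{\emptyset_{\kappa_i}}=\emptyset$), the whole space $X$ is $s\kappa_j$-closed for each index $j$; moreover $\{x\}^c\subset X$, and the single condition to verify, namely $F\subset O$ for the only admissible $O=X$, reads $X\subset X$, which holds. Thus $\{x\}^c$ meets the requirement of Definition \ref{10} and is $(i-j)sg_\kappa^*$-closed.

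I do not anticipate any genuine obstacle here; there is no estimate or nontrivial construction to carry out. The argument rests entirely on the elementary set-theoretic fact that a set containing $\{x\}^c$ is either $X$ or $\{x\}^c$, combined with the contrapositive rephrasing of ``$\{x\}$ is not $s\kappa_i$-closed'' as ``$\{x\}^c$ is not $s\kappa_i$-open.'' The only point demanding care is bookkeeping of the two index conventions, so that the produced enveloping set is taken to be $s\kappa_j$-closed and the test sets $s\kappa_i$-open, matching the $(i-j)$ label rather than the $(j-i)$ one.
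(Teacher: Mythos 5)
Your proposal is correct and follows essentially the same route as the paper: both arguments reduce to the observation that $X$ is the only $s\kappa_i$-open superset of $\{x\}^c$ (since $\{x\}^c$ itself cannot be $s\kappa_i$-open by hypothesis) and then take $F=X$ as the enveloping $s\kappa_j$-closed set. Your write-up is in fact slightly more careful than the paper's, which loosely says ``$\{x\}^c$ is not $\kappa_i$-open'' where ``not $s\kappa_i$-open'' is meant, and which does not spell out why $X$ is $s\kappa_j$-closed.
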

\begin{proof}
Suppose $ x\in X $ and $ \{x\} $ is not $ s\kappa_i $-closed, then $ \{x\}^c $ is not $ \kappa_i $-open. Therefore $ X $ is the only $ s\kappa_i $-open set which contains $ \{x\}^c $. Also $ X $ may be taken as $ s\kappa_j $-closed set containing $ \{x\}^c $. Therefore $ \{x\}^c $ is a $ (i-j)sg_\kappa^* $-closed; $ i,j=1,2; i\not=j $. 
\end{proof}

\begin{theorem}\label{24} 
A bispace $(X,\kappa_1, \kappa_2) $  is pairwise semi-$T_0$ if and only if for any pair of distinct points $x, y \in X $,  there is a set $B$ which contains only one of them such that $ B $ is either $ s\kappa_i $-open or $ s\kappa_j $-closed, $ i, j=1,2 $.
\end{theorem}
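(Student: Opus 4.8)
The plan is to prove the biconditional by unwinding Definition \ref{4} and exploiting the fact that, by Definition \ref{3}, a set is $s\kappa_j$-closed precisely when its complement is $s\kappa_j$-open. The guiding observation is that supplying a semi-open set containing exactly one of two points is interchangeable with supplying a semi-closed set of that kind, since passing to the complement of a semi-closed set both restores openness and swaps which of the two points is captured.

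For the forward implication, I would assume $(X,\kappa_1,\kappa_2)$ is pairwise semi-$T_0$ and fix distinct points $x,y$. By Definition \ref{4} one of two alternatives holds: either there is an $s\kappa_1$-open set $U$ with $x\in U$, $y\notin U$, or there is an $s\kappa_2$-open set $V$ with $y\in V$, $x\notin V$. In the first case I would take $B=U$ and in the second $B=V$; in either case $B$ is $s\kappa_i$-open for some $i\in\{1,2\}$ and contains exactly one of $x,y$, which is the required set.

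For the reverse implication, I would fix distinct $x,y$ and take the set $B$ supplied by the hypothesis, relabelling the pair if necessary so that $x\in B$ and $y\notin B$. If $B$ is $s\kappa_i$-open, then $B$ itself is a semi $\kappa_i$-open set separating $x$ from $y$. If instead $B$ is $s\kappa_j$-closed, then by Definition \ref{3} the complement $X-B$ is $s\kappa_j$-open and satisfies $y\in X-B$, $x\notin X-B$, so this $s\kappa_j$-open set witnesses the separation. In both cases the disjunction of Definition \ref{4} is met, and hence the bispace is pairwise semi-$T_0$.

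The argument is essentially a dictionary translation between the two formulations, so I do not expect a serious obstacle. The only point demanding care is the bookkeeping of the indices $i,j$ together with the relabelling of the pair $x,y$: one must note that a semi-closed set separating $\{x,y\}$ yields, via its complement, a semi-open set separating the same pair in the opposite orientation, which is exactly the freedom that Definition \ref{4} allows when it is applied to an arbitrary (unordered) pair of distinct points.
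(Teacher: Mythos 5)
Your proposal is correct and follows essentially the same route as the paper's proof: both directions are handled by directly unwinding Definition \ref{4} and passing to complements of semi-closed sets, with the unordered-pair relabelling absorbing the index bookkeeping. If anything, your forward direction is cleaner than the paper's (which includes a redundant case there), but the argument is the same dictionary translation.
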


\begin{proof} 
Suppose that the bispace $(X,  \kappa_1, \kappa_2)$  is  pairwise semi-$T_0$ and  $x, y \in X, x\not=y $. So there exists a $ s\kappa_i $-open set $ B $ containing one of $ x, y $ say $ x $, but not $ y $. Then it is done. There may arise the case that $ x\in B $ and $ y \not\in B $. But $ B $ is not a $ s\kappa_i $-open set, then $ y\in X-B, x\not\in X-B$ and $ X-B $ is $ s\kappa_j $-open. So $ B $ is  $ s\kappa_j $-closed. 

Conversely, suppose the condition holds and $ B $ is a $ s\kappa_i $-open set containing $ x $ only. Then clearly the bispace is pairwise semi-$ T_0 $. If $ x\in B $ but $ B $ is $ s\kappa_j $-closed, then $ X-B $ is $ s\kappa_j $-open and $ y\in X-B, x\not\in X-B $. Hence in this case also the bispace is pairwise semi-$ T_0 $.
\end{proof}

\begin{remark}\label{25} If $ (X, \kappa_1) $ or $ (X, \kappa_2) $ are semi-$ T_0 $  spaces, then the bispace $(X,  \kappa_1, \kappa_2)$  is pairwise semi-$ T_0 $. Again, it can be easily shown that pairwise semi-$ T_1 $ is pairwise semi-$T_0$ but the converse may not be true as seen from the Example \ref{26} given below.
\end{remark}

\begin{example}\label{26} Example of pairwise semi-$ T_0 $ bispace which is not pairwise semi-$ T_1 $.

Suppose $ X=R-Q, \kappa_1=\{X, \emptyset, \{\sqrt{3}, \sqrt{5}\}\cup G_i\}, \kappa_2=\{X, \emptyset, G_i\} $ where $ G_i $ runs over all countable subsets of $ X $. Then $(X,  \kappa_1, \kappa_2)$ is a bispace but not a bitopological space. Clearly, the bispace is pairwise semi-$ T_0 $ but it is not pairwise semi-$ T_1 $. For, consider the pair of distinct points $ \sqrt{3}, \sqrt{5}\in X $ and it will not be possible to find a $ s\kappa_1 $-open set which contains only one of $ \sqrt{3}, \sqrt{5} $.  So $ (X,  \kappa_1, \kappa_2 )$ is not pairwise semi-$ T_1 $ bispace.
\end{example}

Now we are going to find out a condition for a pairwise semi-$ T_0 $ bispace to be a pairwise semi-$ T_1 $.

\begin{theorem}\label{27} If a bispace $ (X,  \kappa_1, \kappa_2 )$ is pairwise semi-$T_0 $  then for each pair of distinct points $ p,q\in X $,  either $ p\not \in\overline{s\{q\}_{\kappa_1}} $  or $ q\not \in \overline
{s\{p\}_{\kappa_2}} $.\end{theorem}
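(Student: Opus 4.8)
The plan is to start with an arbitrary pair of distinct points $p, q \in X$ and apply the defining condition of pairwise semi-$T_0$ (Definition \ref{4}) to this pair, which immediately splits the argument into two symmetric cases. The bridge between the separation axiom and the desired conclusion about semi-closures will be the characterization furnished by Theorem \ref{6}, namely $\overline{s\{q\}_{\kappa_i}} = \{q\} \cup s\{q\}_{\kappa_i}'$, together with the fact that a point lies in the semi $\kappa_i$-derived set of $\{q\}$ precisely when every $s\kappa_i$-open set containing it also contains $q$.

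First I would observe that, since $(X, \kappa_1, \kappa_2)$ is pairwise semi-$T_0$, for the pair $p, q$ (taking $p$ as the point $x$ and $q$ as the point $y$ in Definition \ref{4}) either there is an $s\kappa_1$-open set $U$ with $p \in U$ and $q \notin U$, or there is an $s\kappa_2$-open set $V$ with $q \in V$ and $p \notin V$. These two alternatives line up exactly with the two alternatives in the conclusion, so it suffices to handle each one separately.

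In the first case, $U$ is an $s\kappa_1$-open neighbourhood of $p$ that misses $q$, so $p$ cannot be a semi $\kappa_1$-limit point of $\{q\}$; hence $p \notin s\{q\}_{\kappa_1}'$, and since $p \neq q$ we also have $p \notin \{q\}$, so Theorem \ref{6} gives $p \notin \overline{s\{q\}_{\kappa_1}}$. In the second case the symmetric reasoning with $V$ (an $s\kappa_2$-open neighbourhood of $q$ missing $p$) shows $q$ is not a semi $\kappa_2$-limit point of $\{p\}$, whence $q \notin \overline{s\{p\}_{\kappa_2}}$. Either way one of the two required non-memberships holds, which is precisely the disjunction asserted in the statement.

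There is no serious obstacle here; the only point needing care is the translation step, where I must invoke the neighbourhood characterization of the semi-derived set rather than any description of the semi-closure as a semi-closed set, since in a $\sigma$-space the semi-closure of a set need not itself be semi-closed. Routing the argument through Theorem \ref{6} keeps everything at the level of semi-limit points and thereby avoids that pitfall.
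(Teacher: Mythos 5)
Your proposal is correct and follows essentially the same route as the paper's proof: invoke the pairwise semi-$T_0$ definition to get the two cases, then use Theorem \ref{6} (closure equals the set union its semi-derived set) to conclude the relevant non-membership in each case. If anything, your case analysis is slightly more careful than the paper's, which loosely speaks of ``a $s\kappa_1$-open set which contains only one of $p,q$'' before splitting into the two alternatives.
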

\begin{proof}
Let $(X,  \kappa_1, \kappa_2)$ be a pairwise semi-$T_0 $ bispace and $ p, q\in X, p \not= q $. Since $ X $ is pairwise  semi-$T_0 $, there exists a $ s\kappa_1 $-open set $ U $ which contains only one of $ p, q $. Suppose that $ p\in U $ and $ q \not \in U $. Then the $ s\kappa_1 $-open set $ U $ has an empty intersection with $ \{q\} $. Hence $ p\not\in s\{q\}_{\kappa_1}'$. Since $ p \not = q,  p\not\in\overline{s\{q\}_{\kappa_1}}$. Similarly, there may exist a $ s\kappa_2 $-open set $ V $ such that $ q \in V, p \not\in V $ and we get $ q\not\in \overline{s\{p\}_{\kappa_2}} $. Hence the result follows.
\end{proof}

\begin{theorem}\label{28}
In a pairwise semi-$ T_0 $ bispace,  $ p,q\in X, p\not=q $ implies $ \overline{s\{p\}_{\kappa_2}}\not=\overline{s\{q\}_{\kappa_1}} $.
\end{theorem}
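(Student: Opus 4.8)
The plan is to argue by contradiction and reduce everything to the disjunction already established in Theorem \ref{27}. So I would start by assuming, toward a contradiction, that $\overline{s\{p\}_{\kappa_2}}=\overline{s\{q\}_{\kappa_1}}$ for some pair of distinct points $p,q\in X$ in the pairwise semi-$T_0$ bispace $(X,\kappa_1,\kappa_2)$, and aim to violate the conclusion of Theorem \ref{27}.

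The key observation I would record first is that every point lies in its own semi-closure. Indeed, by Theorem \ref{6} we have $\overline{s\{p\}_{\kappa_2}}=\{p\}\cup s\{p\}_{\kappa_2}'$, so $p\in\overline{s\{p\}_{\kappa_2}}$; likewise $q\in\overline{s\{q\}_{\kappa_1}}$. Now, under the contradiction hypothesis $\overline{s\{p\}_{\kappa_2}}=\overline{s\{q\}_{\kappa_1}}$, membership transfers across the equality: from $p\in\overline{s\{p\}_{\kappa_2}}$ we get $p\in\overline{s\{q\}_{\kappa_1}}$, and from $q\in\overline{s\{q\}_{\kappa_1}}$ we get $q\in\overline{s\{p\}_{\kappa_2}}$. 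Thus both $p\in\overline{s\{q\}_{\kappa_1}}$ \emph{and} $q\in\overline{s\{p\}_{\kappa_2}}$ hold simultaneously.

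The final step is to invoke Theorem \ref{27}, which asserts that for the distinct points $p,q$ in a pairwise semi-$T_0$ bispace, either $p\notin\overline{s\{q\}_{\kappa_1}}$ or $q\notin\overline{s\{p\}_{\kappa_2}}$. This directly contradicts the simultaneous memberships derived above, so the assumption must fail and the desired inequality $\overline{s\{p\}_{\kappa_2}}\neq\overline{s\{q\}_{\kappa_1}}$ follows.

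I do not expect any serious obstacle here: the statement is essentially a repackaging of Theorem \ref{27}, and the only genuine content is the remark that a point always belongs to the semi-closure of its own singleton, which Theorem \ref{6} supplies. The one point I would be careful about is keeping the index pairing straight---the theorem matches the $\kappa_2$-semi-closure of $p$ against the $\kappa_1$-semi-closure of $q$, and this is exactly the pairing that Theorem \ref{27} controls, so the two line up without any reindexing.
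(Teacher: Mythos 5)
Your proposal is correct and is essentially the paper's own argument: the paper likewise invokes Theorem \ref{27} to get the disjunction and then uses the fact that $p\in\overline{s\{p\}_{\kappa_2}}$ to separate the two semi-closures, merely phrasing it directly in one case rather than as a contradiction. Your version is a logically equivalent (contrapositive) repackaging, with the added care of citing Theorem \ref{6} for the fact that a point lies in the semi-closure of its own singleton.
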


\begin{proof}
Let the bispace $(X,  \kappa_1, \kappa_2)$ be  pairwise semi-$T_0 $ and $ p, q\in X, p \not= q $. Then either $ p\not \in\overline{s\{q\}_{\kappa_1}} $  or $ q\not \in \overline
{s\{p\}_{\kappa_2}}$. Let $ p\not \in\overline{s\{q\}_{\kappa_1}} $, but $ p \in \overline
{s\{p\}_{\kappa_2}}$. Thus $ \overline{s\{p\}_{\kappa_2}}\not=\overline{s\{q\}_{\kappa_1}}$. 
\end{proof}

\begin{theorem}\label{29}
Pairwise semi-$ T_0 $ and pairwise semi-$ R_0 $ bispace is pairwise semi-$ T_1 $.
\end{theorem}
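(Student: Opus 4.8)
The plan is to take an arbitrary pair of distinct points $x,y\in X$ and produce \emph{simultaneously} a semi $\kappa_1$-open set $U$ with $x\in U,\ y\notin U$ together with a semi $\kappa_2$-open set $V$ with $y\in V,\ x\notin V$, which is precisely what Definition \ref{5} requires. The pairwise semi-$T_0$ hypothesis, through Definition \ref{4}, hands me \emph{one} of these two sets; the work of the pairwise semi-$R_0$ hypothesis will be to manufacture the complementary separating set in the other $\sigma$-structure. Thus the argument naturally splits into the two symmetric cases offered by semi-$T_0$.

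First I would treat the case in which there is a semi $\kappa_1$-open set $U$ with $x\in U$ and $y\notin U$; this set already serves as the required $U$. Now I invoke pairwise semi-$R_0$ (Definition \ref{7}) with $i=1,\ j=2$: since $U$ is semi $\kappa_1$-open and $x\in U$, it forces $\overline{s\{x\}_{\kappa_2}}\subset U$. As $y\notin U$, this yields $y\notin\overline{s\{x\}_{\kappa_2}}$, and by Theorem \ref{6} the latter set equals $\{x\}\cup s\{x\}_{\kappa_2}'$, so $y$ fails to be a semi $\kappa_2$-limit point of $\{x\}$. Unwinding the definition of a semi $\kappa_2$-limit point then supplies a semi $\kappa_2$-open set $V$ with $y\in V$ and $V\cap(\{x\}\setminus\{y\})=\emptyset$; since $x\neq y$ this gives $x\notin V$, which is exactly the required $V$.

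The remaining case is entirely symmetric: pairwise semi-$T_0$ provides a semi $\kappa_2$-open set $V$ with $y\in V,\ x\notin V$, and applying pairwise semi-$R_0$ with $i=2,\ j=1$ gives $\overline{s\{y\}_{\kappa_1}}\subset V$, whence $x\notin\overline{s\{y\}_{\kappa_1}}=\{y\}\cup s\{y\}_{\kappa_1}'$ by Theorem \ref{6}. Unwinding the limit-point definition once more produces a semi $\kappa_1$-open set $U$ with $x\in U$ and $y\notin U$. In both cases the two separating sets of the correct types exist for the pair $x,y$, so by Definition \ref{5} the bispace is pairwise semi-$T_1$.

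There is no genuinely hard step here; the point to watch is the bookkeeping of the indices $i,j$ in the semi-$R_0$ condition, which must be applied so that the $\kappa_j$-closure of the singleton lands inside the $\kappa_i$-open witness set, and the routine passage from ``$y\notin\overline{s\{x\}_{\kappa_2}}$'' to an actual semi $\kappa_2$-open neighbourhood of $y$ avoiding $x$ via Theorem \ref{6} and the meaning of the semi-derived set. One could instead enter through Theorem \ref{27}, which already packages the semi-$T_0$ dichotomy in closure form, but proceeding directly from Definitions \ref{4} and \ref{7} keeps the two cases and the use of semi-$R_0$ most transparent.
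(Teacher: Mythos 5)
Your proof is correct and follows essentially the same strategy as the paper's: use pairwise semi-$T_0$ to obtain one separating semi-open set and pairwise semi-$R_0$ to manufacture the complementary one via the semi-closure of a singleton and Theorem \ref{6}. The only cosmetic difference is that you apply the semi-$R_0$ condition directly to the semi-$T_0$ witness set, whereas the paper first passes through the closure formulation of Theorem \ref{27} and then reconstructs an open set from it; the two routes are equivalent.
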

\begin{proof}
Let the bispace $ (X, \kappa_1, \kappa_2) $ be pairwise semi-$ T_0 $ and pairwise semi-$ R_0 $ and $ p,q\in X; p\not=q $. By Theorem \ref{27}, either $ p\not \in\overline{s\{q\}_{\kappa_1}} $  or $ q\not \in \overline
{s\{p\}_{\kappa_2}} $. Let $ q\not \in \overline
{s\{p\}_{\kappa_2}} $, then there is a $ s\kappa_2 $-closed set $ P $ containing $ p $ such that $ q\not\in P $. So $ q\in X- P $ which is a $ s\kappa_2 $-open set and $ p\not\in X- P $. From Definition \ref{7} of  pairwise semi-$ R_0 $ bispace, $ \overline{s\{q\}_{\kappa_1}}\subset X-P\Rightarrow \overline{s\{q\}_{\kappa_1}}\cap P=\emptyset\Rightarrow \overline{s\{q\}_{\kappa_1}}\cap \{p\}=\emptyset\Rightarrow p\not\in\overline{s\{q\}_{\kappa_1}} $. As $ p\not=q $, then $ p $ is not a $ s\kappa_1 $-limit point of $ \{q\} $. Then there is a $ s\kappa_1 $-open set $ U $ such that $ p\in U $ but $ q\not\in U $. So $ p\in U,q\in X-P,p\not\in X-P, q\not\in U  $. Thus the bispace is pairwise  semi-$ T_1 $.
\end{proof}

The condition pairwise semi $ R_0 $ in the above Theorem is not necessary for pairwise  semi-$ T_1 $ as shown by Example \ref{30} although pairwise  semi-$ T_1 $  implies pairwise  semi-$ T_0 $.

\begin{example}\label{30} Example of pairwise  semi-$ T_1\not\Rightarrow $ pairwise  semi-$ R_0 $.

Let $ X=\{a, b,c\}, \kappa_1=\{\emptyset, X, \{a\}, \{c\}, \{a,c\}\}, \kappa_2=\{\emptyset, X, \{b\},\{a,b\}\} $. Then $ (X, \kappa_1, \kappa_2) $ is a bitopological space so a bispace. Here a family of $ s\kappa_1 $-open  sets is $ \{\emptyset, X, \{a\}, $  $\{c\}, \{a,c\},$  $ \{a,b\}, \{b,c\}\} $ and a family of $ s\kappa_2 $-open sets is $ \{\emptyset, X, \{b\}, \{a,b\}, \{b,c\}\} $ and the bispace is pairwise semi-$ T_1 $. Now consider $ b\in   \{a,b\} $, a $ s\kappa_1 $-open  set, then $ \overline{s\{b\}_{\kappa_2}}=X\not\subset \{a,b\} $ which implies that $ (X, \kappa_1, \kappa_2) $ is not pairwise semi-$ R_0 $.  
\end{example}

\begin{definition}\label{31}   A bispace $ (X, \kappa_1, \kappa_2) $ is said to be pairwise semi-symmetric if for any $ x,  y \in X,  x \in \overline{s\{y\}_{\kappa_i}} \Longrightarrow y\in \overline{s\{x\}_{\kappa_j}}, i,j=1,2; i\not=j $. \end{definition}

\begin{theorem}\label{32} Pairwise semi-symmetric, pairwise semi-$ T_0 $ bispace is pairwise semi $ T_1 $.
\end{theorem}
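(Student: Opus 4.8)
The plan is to use Theorem \ref{27} as the engine and let pairwise semi-symmetry (Definition \ref{31}) do the amplifying. Theorem \ref{27} already guarantees, for a pairwise semi-$T_0$ bispace and any two distinct points $p,q$, that at least one of the two non-membership conditions $p\not\in\overline{s\{q\}_{\kappa_1}}$ or $q\not\in\overline{s\{p\}_{\kappa_2}}$ holds. Pairwise semi-$T_1$ requires separating semi-open sets of \emph{both} kinds, so the work is to upgrade whichever single condition Theorem \ref{27} supplies into both conditions simultaneously; this is exactly what semi-symmetry will achieve, since it forces the two non-membership conditions to be equivalent.

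First I would fix distinct $p,q\in X$ and apply Theorem \ref{27}, splitting into two symmetric cases. Consider the case $q\not\in\overline{s\{p\}_{\kappa_2}}$. By Theorem \ref{6}, $\overline{s\{p\}_{\kappa_2}}=\{p\}\cup s\{p\}_{\kappa_2}'$, so $q\not\in s\{p\}_{\kappa_2}'$, i.e. $q$ is not a semi $\kappa_2$-limit point of $\{p\}$; hence there is a $s\kappa_2$-open set $V$ with $q\in V$ and $p\not\in V$, which is the $\kappa_2$-half of the semi-$T_1$ requirement. To obtain the $\kappa_1$-half I would invoke semi-symmetry in contrapositive form: taking $x=p,\,y=q,\,i=1,\,j=2$ in Definition \ref{31} gives the implication $p\in\overline{s\{q\}_{\kappa_1}}\Rightarrow q\in\overline{s\{p\}_{\kappa_2}}$, whose contrapositive, together with the standing assumption $q\not\in\overline{s\{p\}_{\kappa_2}}$, yields $p\not\in\overline{s\{q\}_{\kappa_1}}$. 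Again by Theorem \ref{6}, $p$ is not a semi $\kappa_1$-limit point of $\{q\}$, so there is a $s\kappa_1$-open set $U$ with $p\in U$ and $q\not\in U$. Together $U$ and $V$ give precisely pairwise semi-$T_1$. The remaining case $p\not\in\overline{s\{q\}_{\kappa_1}}$ is handled identically with the two spaces interchanged, using the $(i,j)=(2,1)$ instance of semi-symmetry.

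I do not expect a serious obstacle. The only points needing care are the passage from ``$z$ is not in the semi-closure of $\{w\}$'' to ``there is a separating semi-open set,'' which rests on Theorem \ref{6} and the definition of a semi-limit point, and the correct bookkeeping of the indices $i,j$ when applying Definition \ref{31} in its contrapositive form. The conceptual heart of the argument is simply that semi-symmetry renders the two alternatives of Theorem \ref{27} equivalent, so that securing one of them automatically delivers the other.
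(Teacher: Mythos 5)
Your proposal is correct and follows essentially the same route as the paper: invoke Theorem \ref{27} to obtain one of the two non-membership conditions, use pairwise semi-symmetry in contrapositive form to deduce the other, and then extract the two separating semi-open sets. The only cosmetic difference is that you pass through Theorem \ref{6} and the semi-limit-point characterization to produce the separating sets, whereas the paper takes a semi-closed set missing the point and complements it; the two extractions are interchangeable.
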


\begin{proof}
Let $(X,  \kappa_1, \kappa_2)$ be a pairwise semi-symmetric and pairwise semi-$ T_0 $ bispace and let $ a, b \in X,  a\not= b$. Since the bispace is pairwise semi-$ T_0 $, either $ a\not\in\overline{s\{b\}_{\kappa_1}} $ or $ b\not\in\overline{s\{a\}_{\kappa_2}} $. Let $ a\not\in\overline{s\{b\}_{\kappa_1}} $ . Then we claim that $ b\not\in\overline{s\{a\}_{\kappa_2}} $.  For, if $ b\in \overline{s\{a\}_{\kappa_2}} $  then it would imply that $ a\in\overline{s\{b\}_{\kappa_1}} $, since the bispace is pairwise semi-symmetric. But this contradicts that $   a\not\in\overline{s\{b\}_{\kappa_1}} $. Since $ a\not\in \overline{s\{b\}_{\kappa_1}}$, there is a $ s\kappa_1 $-closed set $ F $ such that $ b\in F $ and $ a\not\in F $. So $ a\in X-F $, a $ s\kappa_1 $-open set and $ b\not\in X-F $. Again since $ b\not\in\overline{s\{a\}_{\kappa_2}} $, there is a $ s\kappa_2 $-closed set $ P $ such that $ a\in P $ and $ b\not\in P $.  So $ b\in X-P $, a $ s\kappa_2 $-open set and $ a\not\in X-P $.  Hence the bispace is pairwise semi-$ T_1 $.  
\end{proof}

\section{\bf Pairwise semi-$ T_\omega $ bispace}

In this section we give the idea of pairwise semi-$ T_\omega $ axiom in a bispace and discuss some of its properties. Though pairwise semi-$ T_\omega $ axiom can not be placed between pairwise semi-$ T_0 $ and pairwise semi-$ T_1 $ axioms but we try to establish a relation among them.

\begin{definition} \label{33}  A bispace $ (X, \kappa_1, \kappa_2) $ is said to be pairwise semi-$T_\omega $  if and only if every $ (j-i)sg_\kappa^* $-closed set is $ s\kappa_i $-closed; $ i,j=1,2; i\not=j $.
\end{definition}

A topological space is $ T_\frac{1}{2} $ \cite{WD} if and only if each singleton is either open or closed. For a $ \sigma $-space, condition is necessary but it is not sufficient \cite{DR}. For this we introduce the following definition to find 
necessary and sufficient conditions for a bispace to be pairwise semi-$ T_\omega $.

\begin{definition} \label{34}   For any set $ D $ in a bispace $ (X, \kappa_1, \kappa_2) $, we define $ \overline{sD_{gi}^{*j}}=\bigcap\{A:D\subset A, A $ is $ (j-i)sg_\kappa^* $-closed in $ X $\}, then $ \overline{sD_{gi}^{*j}} $ is called $ (j-i)sg_\kappa^* $-closure of $ D; i,j=1,2; i\not=j$.

We denote the following sets as $ \mathcal {G}_i $  and $ \mathcal {G}_i' $ which will be used in the sequel.

(i) $ \mathcal {G}_i = \{A: \overline{s(X -A)_{\kappa_i}}$ is $ s\kappa_i $-closed\}; $ i=1,2 $  and

(ii) $ \mathcal {G}_i' = \{A: \overline{s(X -A)_{gi}^{*j}}$ is  $ (j-i)sg_\kappa^*$-closed; $ i,j=1, 2;i\not =j $. \end{definition}

\begin{theorem}\label{60}
Suppose $ (X, \kappa_1, \kappa_2) $ is a pairwise semi-$ T_\omega $ bispace then 

(i): for each $ x\in X, $ if $ \{x\} $ is is not $ s\kappa_i $-closed, it is $ s\kappa_j $-open; $ i,j=1,2; i\not=j $,

(ii): $ \kappa_i$-$s.o.(X)=\kappa_{i-j}$-$s.o.(X)^* $  where $ \kappa_{i-j}$-$s.o.(X)^*=\{D: D\subset X: \overline{s(X-D)_{gi}^{*j}}=X-D; i\not=j; i,j=1,2 $.  
\end{theorem}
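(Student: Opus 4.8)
The plan is to prove the two parts separately, the common thread being that the pairwise semi-$T_\omega$ hypothesis, together with Remark \ref{13}, makes the class of $(j-i)sg_\kappa^*$-closed sets coincide with the class of $s\kappa_i$-closed sets: Remark \ref{13} gives that every $s\kappa_i$-closed set is $(j-i)sg_\kappa^*$-closed, while Definition \ref{33} supplies the converse. I would record this equivalence at the outset, since it drives everything that follows.

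For (i), I would feed Theorem \ref{23} into Definition \ref{33} after a relabelling of indices. Assume $\{x\}$ is not $s\kappa_i$-closed; Theorem \ref{23} then gives that $\{x\}^c$ is $(i-j)sg_\kappa^*$-closed. Because the pairwise semi-$T_\omega$ condition of Definition \ref{33} is stated for every ordered pair with $i\neq j$, interchanging the roles of $i$ and $j$ shows that every $(i-j)sg_\kappa^*$-closed set is $s\kappa_j$-closed. Applying this to $\{x\}^c$ yields that $\{x\}^c$ is $s\kappa_j$-closed, i.e. $\{x\}$ is $s\kappa_j$-open, which is exactly (i). The only thing to watch here is the index bookkeeping between Theorem \ref{23}, which produces an $(i-j)$-closed complement, and Definition \ref{33}, which is phrased for $(j-i)$-closed sets.

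For (ii), writing $E=X-D$, I would read the defining condition $\overline{s(X-D)_{gi}^{*j}}=X-D$ through Definition \ref{34}, namely $\overline{sE_{gi}^{*j}}=\bigcap\{A:E\subset A,\ A\ \text{is}\ (j-i)sg_\kappa^*\text{-closed}\}$, noting that $E\subset\overline{sE_{gi}^{*j}}$ always. For the inclusion $\kappa_i$-$s.o.(X)\subset\kappa_{i-j}$-$s.o.(X)^*$: if $D$ is $s\kappa_i$-open then $E$ is $s\kappa_i$-closed, so by Remark \ref{13} $E$ is itself $(j-i)sg_\kappa^*$-closed and hence belongs to the intersecting family; this forces $\overline{sE_{gi}^{*j}}\subset E$, whence $\overline{sE_{gi}^{*j}}=E$ and $D\in\kappa_{i-j}$-$s.o.(X)^*$. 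For the reverse inclusion I would use a squeeze: in a pairwise semi-$T_\omega$ bispace every $(j-i)sg_\kappa^*$-closed superset $A$ of $E$ is $s\kappa_i$-closed, so monotonicity of the $s\kappa_i$-closure gives $\overline{sE_{\kappa_i}}\subset A$ for each such $A$, hence $\overline{sE_{\kappa_i}}\subset\overline{sE_{gi}^{*j}}$. Combining with $E\subset\overline{sE_{\kappa_i}}$ and the hypothesis $\overline{sE_{gi}^{*j}}=E$ squeezes $\overline{sE_{\kappa_i}}=E$; by Theorem \ref{6} this gives $sE_{\kappa_i}'\subset E$, so $E$ is $s\kappa_i$-closed and $D$ is $s\kappa_i$-open, i.e. $D\in\kappa_i$-$s.o.(X)$.

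The step I expect to be the genuine obstacle is the reverse inclusion of (ii), precisely because a fixed point of the $(j-i)sg_\kappa^*$-closure need not be $(j-i)sg_\kappa^*$-closed: an intersection of $(j-i)sg_\kappa^*$-closed sets can fail to be one (Example \ref{18}(i)), so one cannot simply read off ``$\overline{sE_{gi}^{*j}}=E\Rightarrow E$ is $(j-i)sg_\kappa^*$-closed''. The pairwise semi-$T_\omega$ hypothesis is exactly what rescues the argument, since it replaces the intersecting family by $s\kappa_i$-closed sets and thereby licenses the containment $\overline{sE_{\kappa_i}}\subset\overline{sE_{gi}^{*j}}$ on which the squeeze rests. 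I would therefore single out this containment as the only non-formal point and verify it carefully, all the remaining manipulations being routine.
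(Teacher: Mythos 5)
Your treatment of (i) is exactly the paper's: Theorem \ref{23} gives that $\{x\}^c$ is $(i-j)sg_\kappa^*$-closed, and the pairwise semi-$T_\omega$ hypothesis of Definition \ref{33}, read with the indices interchanged, makes $\{x\}^c$ $s\kappa_j$-closed, hence $\{x\}$ $s\kappa_j$-open. For (ii) you are also following the paper's route: the paper merely observes that under pairwise semi-$T_\omega$ the $(j-i)sg_\kappa^*$-closed sets coincide with the $s\kappa_i$-closed sets, so $\overline{s(E)_{\kappa_i}}=\overline{sE_{gi}^{*j}}$ for every $E$, and declares that the result follows; your two inclusions are an honest unpacking of that one line, and both the forward inclusion and the squeeze $E\subset\overline{sE_{\kappa_i}}\subset\overline{sE_{gi}^{*j}}=E$ are correct.

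The genuine gap is the final step of the reverse inclusion: from $\overline{sE_{\kappa_i}}=E$ (equivalently $sE_{\kappa_i}'\subset E$, by Theorem \ref{6}) you conclude that $E$ is $s\kappa_i$-closed. That implication is a topological-space fact which fails in a bispace: here ``$s\kappa_i$-closed'' means the complement is $s\kappa_i$-open (Definition \ref{3}), and a set equal to its own semi-closure need not be semi-closed, precisely because arbitrary intersections of closed sets need not be $s\kappa_i$-closed --- this is the content of Theorem \ref{6A}, and the paper itself warns that the s-closure of a set may fail to be semi-closed. Concretely, in the bispace of Example \ref{65} (which is pairwise semi-$T_\omega$, the $(j-i)sg_\kappa^*$-closed sets being exactly the countable and cocountable sets together with $\emptyset$ and $X$), the set $E$ of irrationals in $(0,2)$ satisfies $\overline{sE_{gi}^{*j}}=\overline{sE_{\kappa_i}}=E$ yet is not $s\kappa_i$-closed, so $X-E$ lies in $\kappa_{i-j}$-$s.o.(X)^*$ but not in $\kappa_i$-$s.o.(X)$; the inclusion $\kappa_{i-j}$-$s.o.(X)^*\subset\kappa_i$-$s.o.(X)$ therefore cannot be reached this way without an additional hypothesis such as condition (C) of Theorem \ref{66}. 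In fairness, the paper's own proof of (ii) hides the same lacuna inside ``hence the result follows,'' so you have reproduced the intended argument; but the obstruction sits in the step you dismissed as routine, not in the containment $\overline{sE_{\kappa_i}}\subset\overline{sE_{gi}^{*j}}$ that you singled out for scrutiny.
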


\begin{proof}
Let $ (X,  \kappa_1, \kappa_2) $ be a pairwise semi-$ T_\omega $ bispace.

(i):  Let $ x\in X, \{x\} $ is not $ s\kappa_i $-closed, then by Theorem \ref{23}, $ \{x\}^c $ is  $ (i-j)sg_\kappa^*$-closed, so $ \{x\}^c $ is $s\kappa_j$-closed, $ i,j=1,2;i\not=j $. This implies that $ \{x\} $ is $ s\kappa_j$-open.

(ii): For pairwise semi-$ T_\omega $ bispace, $ s\kappa_i $-closed sets and $ (j-i)sg_\kappa^* $-closed sets coincide and hence for each $ D\subset X $ we have $ \overline{s
(D)_{\kappa_i}}=\overline{sD_{gi}^{*j}} $; $ i,j=1,2; i\not=j $. Hence the result follows.
\end{proof}

\begin{theorem} \label{35}   A bispace $ (X,  \kappa_1, \kappa_2) $ is pairwise semi-$ T_\omega $ if and only if 

(a)  for each $ x \in X $,  $ \{x\} $ is not $ s\kappa_i $-closed, then it is $ s\kappa_j $-open; $ i,j=1,2;i\not=j $ and

(b)  $ \mathcal {G}_i =  \mathcal {G}_i' $  
where $ \mathcal {G}_i $ and $  \mathcal {G}_i' $ are as in Definition \ref{34} 
\end{theorem}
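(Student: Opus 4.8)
The plan is to prove the two implications of Theorem \ref{35} separately, leaning on the structural facts already recorded for pairwise semi-$T_\omega$ bispaces in Theorem \ref{60}. For the forward direction, assume the bispace is pairwise semi-$T_\omega$. Condition (a) is then literally Theorem \ref{60}(i), so nothing new is required. For condition (b) I would exploit the defining feature of Definition \ref{33} together with Remark \ref{13}: Definition \ref{33} says every $(j-i)sg_\kappa^*$-closed set is $s\kappa_i$-closed, while Remark \ref{13} gives the reverse, so the two families coincide. Hence, by Theorem \ref{60}(ii), $\overline{s(X-A)_{\kappa_i}}=\overline{s(X-A)_{gi}^{*j}}$ for every $A\subset X$; calling this common set $P$, membership $A\in\mathcal{G}_i$ means $P$ is $s\kappa_i$-closed, which by the coincidence of the two families is equivalent to $P$ being $(j-i)sg_\kappa^*$-closed, that is, to $A\in\mathcal{G}_i'$. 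This yields $\mathcal{G}_i=\mathcal{G}_i'$ and finishes the forward direction.

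The converse is the substantial part. Assume (a) and (b) and let $D$ be an arbitrary $(j-i)sg_\kappa^*$-closed set; the goal is $\overline{sD_{\kappa_i}}=D$, which forces $D$ to be $s\kappa_i$-closed. First I would feed $D$ into (b): since $D$ is $(j-i)sg_\kappa^*$-closed, $D$ is the smallest $(j-i)sg_\kappa^*$-closed set containing itself, so $\overline{sD_{gi}^{*j}}=D$. Taking $A=X-D$, the set $\overline{s(X-A)_{gi}^{*j}}=D$ is $(j-i)sg_\kappa^*$-closed, so $A\in\mathcal{G}_i'=\mathcal{G}_i$, whence $\overline{sD_{\kappa_i}}$ is $s\kappa_i$-closed. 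Next, using Theorem \ref{6} one checks that $\overline{sD_{\kappa_i}}=D\cup sD_{\kappa_i}'$ is the smallest $s\kappa_i$-closed set containing $D$ (any $s\kappa_i$-closed superset absorbs $sD_{\kappa_i}'$), so it lies inside the $s\kappa_i$-closed set $F\supset D$ supplied by Definition \ref{10} (equivalently by Theorem \ref{15}).

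It then remains to show $\overline{sD_{\kappa_i}}-D=\emptyset$, and here I would argue by contradiction using (a). Suppose $x\in\overline{sD_{\kappa_i}}-D$; then $x\in F$ while $x\in sD_{\kappa_i}'$. If $\{x\}$ is $s\kappa_j$-closed, then $X-\{x\}$ is an $s\kappa_j$-open set containing $D$ (as $x\notin D$), so the defining property of $F$ in Definition \ref{10} forces $F\subset X-\{x\}$, contradicting $x\in F$; this is precisely the ``$F-D$ contains no nonempty $s\kappa_j$-closed set'' phenomenon of Theorem \ref{15}. If instead $\{x\}$ is not $s\kappa_j$-closed, then (a) makes $\{x\}$ an $s\kappa_i$-open set, whereas $x\in sD_{\kappa_i}'$ requires every $s\kappa_i$-open set about $x$ to meet $D$ in a point other than $x$, which the singleton $\{x\}$ cannot do. Both cases are impossible, so $\overline{sD_{\kappa_i}}=D$ and $D$ is $s\kappa_i$-closed; thus the bispace is pairwise semi-$T_\omega$.

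I expect the main obstacle to be the careful bookkeeping of the index pairs $(i,j)$ versus $(j,i)$ when invoking (a) and Theorem \ref{15} (condition (a) must be used in the orientation ``not $s\kappa_j$-closed $\Rightarrow$ $s\kappa_i$-open''), together with the auxiliary fact that $\overline{sD_{\kappa_i}}$ is the smallest $s\kappa_i$-closed superset of $D$, which is exactly what is needed to place the point $x$ inside $F$ and thereby trigger the contradiction in the first case.
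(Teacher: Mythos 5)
Your proposal is correct and follows essentially the same route as the paper: the forward direction via Theorem \ref{23} (equivalently Theorem \ref{60}(i)) and the coincidence of the $s\kappa_i$-closed and $(j-i)sg_\kappa^*$-closed families, and the converse by showing $\overline{sD_{\kappa_i}}$ is $s\kappa_i$-closed from $\mathcal{G}_i=\mathcal{G}_i'$ and then ruling out a point of $\overline{sD_{\kappa_i}}-D$ by the same two-case singleton argument (Theorem \ref{15} plus condition (a)). Your explicit remark that $\overline{sD_{\kappa_i}}$ sits inside the $s\kappa_i$-closed set $F$ of Definition \ref{10} is a detail the paper leaves implicit, but it does not change the argument.
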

\begin{proof}
Suppose $ (X,  \kappa_1, \kappa_2) $ is a pairwise semi-$ T_\omega $ bispace and $ x\in X, \{x\} $ is not $ s\kappa_i $-closed, then by Theorem \ref{23}, $ \{x\}^c $ is  $ (i-j)sg_\kappa^*$-closed and so $ \{x\}^c $ is $s\kappa_j$-closed, $ i,j=1,2;i\not=j $. This implies that $ \{x\} $ is $ s\kappa_j$-open. Now let $ D\in \mathcal{G}_i $, then $ \overline{s(X-D)_{\kappa_i}} $ is $ s\kappa_i $-closed  $ \Rightarrow \overline{s(X-D)_{\kappa_i}} $ is $ (j-i)sg_\kappa^*$-closed $ \Rightarrow \overline{s(X-D)_{gi}^{*j}} $ is $ (j-i)sg_\kappa^*$-closed $ \Rightarrow D\in \mathcal{G}_i' \Rightarrow \mathcal {G}_i \subset  \mathcal {G}_i' $. Again suppose $ D\in \mathcal{G}_i' $, then $ \overline{s(X-D)_{gi}^{*j}} $ is $ (j-i)sg_\kappa^*$-closed $ \Rightarrow \overline{s(X-D)_{\kappa_i}} $ is $s\kappa_i $-closed $ \Rightarrow D\in \mathcal{G}_i \Rightarrow \mathcal {G}_i' \subset  \mathcal {G}_i $. Hence $ \mathcal {G}_i =  \mathcal {G}_i' $.

Conversely, suppose the conditions hold and $ D $ is $ (j-i)sg_\kappa^*$-closed. Then $ \overline{s(D)_{g_i}^{*j}}=D $ is $ (j-i)sg_\kappa^*$-closed $ \Rightarrow X-D \in \mathcal {G}_i' $. Since $ \mathcal {G}_i =  \mathcal {G}_i' $, then $ X-D \in \mathcal {G}_i \Rightarrow \overline{s(D)_{\kappa_i}}$ is  $ s\kappa_i$-closed. We claim that $ \overline{s(D)_{\kappa_i}} =D$. If not, then there exists $ x\in  \overline{s(D)_{\kappa_i}}-D $. Since $ D $ is $ (j-i)sg_\kappa^*$-closed, $\{x\} $ cannot be $ s\kappa_j $-closed, by Theorem \ref{15}. Hence by assumption, $ \{x\} $ is $s\kappa_i $-open and so $ \{x\}^c $ is  $s\kappa_i$-closed. But $ x\not \in D \Rightarrow D\subset \{x\}^c \Rightarrow \overline{s(D)_{\kappa_i}} \subset \{x\}^c$. We see that $ x\in \overline{s(D)_{\kappa_i}} -D $ and $ \overline{s(D)_{\kappa_i}} \subset \{x\}^c $.  Both implies that $ x\in \{x\}^c $, a contradiction. Hence $ D $ is  $s\kappa_i $-closed and the bispace is pairwise semi-$ T_\omega $. 
\end{proof}

\begin{theorem} \label{36} A bispace $ (X,  \kappa_1, \kappa_2) $ is pairwise semi-$ T_\omega $ if and only if for $ i,j=1,2;i\not=j $;

(i) each subset of $ X $ is the intersection of all  $ s\kappa_i $-closed sets and all $ s\kappa_j $-open sets  containing it and 

(ii)  $ \mathcal {G}_i =  \mathcal {G}_i' $  
where $ \mathcal {G}_i $ and $  \mathcal {G}_i' $ are as in Definition \ref{34}.
\end{theorem}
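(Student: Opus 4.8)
The plan is to reduce Theorem~\ref{36} to the already-established Theorem~\ref{35}, observing that condition~(ii) here is verbatim condition~(b) there. Thus the entire content lies in showing that, for a bispace, condition~(i) is \emph{equivalent} to condition~(a) of Theorem~\ref{35}, and this equivalence needs neither (ii) nor the semi-$T_\omega$ hypothesis. Once it is in hand, both implications of Theorem~\ref{36} follow at once: adjoining (ii)$=$(b) and invoking Theorem~\ref{35} gives ``[(i) and (ii)] $\Leftrightarrow$ pairwise semi-$T_\omega$.'' Throughout I would state at the outset that every singleton condition is read symmetrically in $i,j=1,2,\ i\neq j$, so that the open/closed roles may be interchanged between the two orderings.

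First I would prove the easy half of the equivalence, that the singleton dichotomy of condition~(a) yields~(i). Condition~(a) read across both orderings says that each $\{x\}$ is either $s\kappa_i$-open or $s\kappa_j$-closed (``$\{x\}$ not $s\kappa_i$-closed $\Rightarrow$ $s\kappa_j$-open'' is just ``$\{x\}$ is $s\kappa_i$-closed or $s\kappa_j$-open''). Fix $A\subset X$; trivially $A$ is contained in the intersection of all $s\kappa_i$-closed and all $s\kappa_j$-open sets containing it. For the reverse inclusion take $x\notin A$: if $\{x\}$ is $s\kappa_i$-open then $\{x\}^c$ is an $s\kappa_i$-closed set with $A\subset\{x\}^c$ and $x\notin\{x\}^c$, while if $\{x\}$ is $s\kappa_j$-closed then $\{x\}^c$ is such an $s\kappa_j$-open set; in either case $x$ is excluded by a member of the family, so the intersection equals $A$ and (i) holds.

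For the converse half I would apply (i) to the set $A=\{x\}^c$. Since the only set containing $\{x\}^c$ that omits $x$ is $\{x\}^c$ itself, representing $\{x\}^c$ as the intersection of the $s\kappa_i$-closed and $s\kappa_j$-open sets containing it forces $\{x\}^c$ to be itself either $s\kappa_i$-closed or $s\kappa_j$-open; equivalently $\{x\}$ is $s\kappa_i$-open or $s\kappa_j$-closed, which is condition~(a). This closes the equivalence (i)$\Leftrightarrow$(a) and hence the theorem via Theorem~\ref{35}. If a self-contained forward direction is preferred, one can instead argue directly from semi-$T_\omega$: by Theorem~\ref{23} applied in the ordering $j\to i$, a non-$s\kappa_j$-closed singleton $\{x\}$ has $\{x\}^c$ being $(j-i)sg_\kappa^*$-closed, hence $s\kappa_i$-closed by Definition~\ref{33}, so $\{x\}$ is $s\kappa_i$-open, giving the dichotomy and then (i) exactly as above.

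The step I expect to require the most care is the index bookkeeping. Condition~(i) most naturally produces the dichotomy ``$\{x\}$ is $s\kappa_i$-open or $s\kappa_j$-closed,'' whereas condition~(a) of Theorem~\ref{35} is phrased as ``$\{x\}$ not $s\kappa_i$-closed $\Rightarrow$ $s\kappa_j$-open''; these look like dual statements and coincide only because each is asserted for both orderings $i\neq j$, so that the two apparent forms are the same pair of implications with $i$ and $j$ interchanged. Making this symmetry explicit before comparing (i) with (a) is what prevents a spurious mismatch, and it is the only genuinely delicate point in the argument.
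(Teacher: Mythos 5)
Your proposal is correct and follows essentially the same route as the paper: both directions reduce Theorem \ref{36} to Theorem \ref{35} by showing that condition (i) is interchangeable with the singleton dichotomy of condition (a) (via writing a set as the intersection of complements of singletons not in it, and conversely testing (i) on $\{x\}^c$), with (ii) carried over verbatim as (b). Your explicit remark on the $i$--$j$ index symmetry is a point the paper leaves implicit, but it does not change the argument.
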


\begin{proof}
Let the bispace $ (X,  \kappa_1, \kappa_2) $ be pairwise semi-$ T_\omega $. (i): Let $ D\subset X $, then by Theorem \ref{35}, $ D=\bigcap\{X-\{x\}: x\in X, x\not\in D $; if $ \{x\} $ is not $ s\kappa_i $-closed, then it is $ s\kappa_j $-open; $ i,j=1,2;i\not=j\} $; so $ D $ is the intersection of all  $ s\kappa_i $-closed sets and all $ s\kappa_j $-open sets containing it.
(ii) It is proved in Theorem \ref{35}. 

Conversely, assume the conditions (i) and (ii) hold and $ x\in X, \{x\} $ is not $ s\kappa_i $-closed, then $ X-\{x\} $ is not $ s\kappa_i $-open; $ i,j=1,2;i\not=j$ and $ X $ is the only $ s\kappa_i $-open set containing $ X-\{x\} $. So by assumption, $ X-\{x\} $ is $ s\kappa_j $-closed which implies that $ \{x\} $ is $ s\kappa_j $-open. Hence the result follows by heorem \ref{35}.   
\end{proof}

\begin{theorem}
Every semi-door bispace $ (X,  \kappa_1, \kappa_2) $ is pairwise semi-$ T_\omega $.
\end{theorem}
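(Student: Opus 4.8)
The plan is to unwind the definitions and show that a $(j-i)sg_\kappa^*$-closed set in a semi-door bispace is forced to be $s\kappa_i$-closed, which is exactly the condition in Definition \ref{33} for being pairwise semi-$T_\omega$. First I would recall the definition of a semi-door bispace: every subset of $X$ that is not $s\kappa_i$-closed is $s\kappa_j$-open; $i,j=1,2;i\not=j$. So the door property gives us a dichotomy for every single subset, and the strategy is to feed an arbitrary $(j-i)sg_\kappa^*$-closed set $D$ into this dichotomy and rule out the ``bad'' alternative.

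The key steps, in order, are as follows. Let $D$ be an arbitrary $(j-i)sg_\kappa^*$-closed set; I must show $D$ is $s\kappa_i$-closed. By the door property, either $D$ is already $s\kappa_i$-closed, in which case we are done, or $D$ is $s\kappa_j$-open. So the whole argument reduces to handling the second case. Here I would invoke Remark \ref{13}, which states precisely that a $(j-i)sg_\kappa^*$-closed set that is in addition semi $\kappa_j$-open must be $s\kappa_i$-closed (``however, it is true if in addition $D$ is semi $\kappa_j$-open''). Thus in either branch of the dichotomy $D$ ends up $s\kappa_i$-closed, so every $(j-i)sg_\kappa^*$-closed set is $s\kappa_i$-closed and the bispace is pairwise semi-$T_\omega$ by Definition \ref{33}.

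If one prefers not to cite Remark \ref{13} as a black box, the second case can be argued directly from Theorem \ref{11}: since $D$ is $(j-i)sg_\kappa^*$-closed there is an $s\kappa_i$-closed set $F$ with $D\subset F\subset sker_j(D)$, and because $D$ itself is $s\kappa_j$-open we have $D=sker_j(D)$ (the intersection of all $s\kappa_j$-open supersets of $D$ collapses to $D$), forcing $F=D$ and hence $D$ is $s\kappa_i$-closed. I would likely present the short citation-based version in the main line and keep this Theorem \ref{11} computation in mind as the justification.

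The main obstacle, such as it is, is purely bookkeeping with the index convention: the door property is stated with the roles ``not $s\kappa_i$-closed $\Rightarrow$ $s\kappa_j$-open,'' and I must make sure these indices line up with the $(j-i)$ labelling in the definition of $(j-i)sg_\kappa^*$-closed and with the indices in Remark \ref{13}, rather than accidentally swapping $i$ and $j$. Once the dichotomy is set up with the correct indices, the proof is genuinely a two-line consequence of the definitions plus Remark \ref{13}, so the real care is in checking that the ``in addition semi $\kappa_j$-open'' hypothesis of Remark \ref{13} is exactly the alternative handed to us by the semi-door condition.
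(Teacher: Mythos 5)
Your proof is correct: the paper itself gives no argument (it merely declares the proof "straightforward"), and your two-case dichotomy --- $D$ is $s\kappa_i$-closed, or else by the door property $D$ is $s\kappa_j$-open and then Remark \ref{13} (equivalently, the $sker_j(D)=D$ computation via Theorem \ref{11}) forces $D$ to be $s\kappa_i$-closed anyway --- is exactly the intended straightforward argument, with the indices lining up as you checked. Nothing further is needed.
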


The proof is straight forward.

\begin{example}\label{38} Assume $ X=\{p, q\} $ and $ \kappa_1=\kappa_2=\{\emptyset,X, \{p\}\} $, then $ (X,  \kappa_1, \kappa_2) $ is a bitopological space so a bispace. Obviously, the bispace $ (X,  \kappa_1, \kappa_2) $ is pairwise semi-$ T_\omega  $ but it is not pairwise semi-$ T_1 $.
\end{example}

\begin{theorem}\label{39}
Pairwise semi-$ T_\omega $ bispace $ (X,\kappa_1,\kappa_2) $ is pairwise semi-$ T_0 $.
\end{theorem}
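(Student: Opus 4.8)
The plan is to show that a pairwise semi-$T_\omega$ bispace satisfies the defining condition of pairwise semi-$T_0$ (Definition \ref{4}), namely that for any two distinct points $x,y\in X$ there is a semi $\kappa_1$-open set containing exactly one of them or a semi $\kappa_2$-open set containing exactly one of them. The natural bridge is Theorem \ref{24}, which reformulates pairwise semi-$T_0$ as: for each pair of distinct points there is a set $B$ containing only one of them that is either $s\kappa_i$-open or $s\kappa_j$-closed. So it suffices to manufacture, for arbitrary distinct $x,y$, such a separating set, and the hypothesis of pairwise semi-$T_\omega$ (Definition \ref{33}: every $(j-i)sg_\kappa^*$-closed set is $s\kappa_i$-closed) should be exactly what makes this possible.

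First I would fix distinct points $x,y\in X$ and examine the singleton $\{x\}$. The key structural fact available is Theorem \ref{60}(i), which holds in any pairwise semi-$T_\omega$ bispace: for each $x$, if $\{x\}$ is not $s\kappa_i$-closed then it is $s\kappa_j$-open (for $i,j=1,2$, $i\neq j$). I would apply this with a chosen index, say consider whether $\{x\}$ is $s\kappa_1$-closed. If $\{x\}$ is $s\kappa_1$-closed, then $\{x\}$ is a $s\kappa_1$-closed set containing $x$ but not $y$, which is precisely a separating set of the form ``$s\kappa_j$-closed'' in Theorem \ref{24}, so the $T_0$ condition holds. If instead $\{x\}$ is not $s\kappa_1$-closed, then by Theorem \ref{60}(i) it is $s\kappa_2$-open; this is a $s\kappa_i$-open set containing $x$ but not $y$, again matching the separating condition of Theorem \ref{24}.

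Thus in either case I produce a set (either $s\kappa_1$-closed or $s\kappa_2$-open) that contains $x$ and excludes $y$, so by Theorem \ref{24} the bispace is pairwise semi-$T_0$. The argument is symmetric in the roles of the two indices, so one may alternatively test $\{x\}$ against $s\kappa_2$-closedness and invoke $s\kappa_1$-openness; either dichotomy closes the proof. I do not expect any genuine obstacle here: the entire content is packaged in Theorem \ref{60}(i) and Theorem \ref{24}, both already established. The only point requiring minor care is bookkeeping the indices $i,j$ so that ``$\{x\}$ not $s\kappa_i$-closed $\Rightarrow$ $\{x\}$ is $s\kappa_j$-open'' aligns correctly with the ``$s\kappa_i$-open or $s\kappa_j$-closed'' alternatives in Theorem \ref{24}; once the indices are matched consistently, the conclusion is immediate.
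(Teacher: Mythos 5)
Your proof is correct, and it is organized differently from the paper's. The paper argues by contradiction: assuming the bispace is not pairwise semi-$T_0$, it extracts two distinct points $l,m$ with $\overline{s\{l\}_{\kappa_2}}=\overline{s\{m\}_{\kappa_1}}$, shows via Theorem \ref{23} and Definition \ref{33} that $\{l\}^c$ must be $s\kappa_1$-closed, and then derives a contradiction from the assumed equality of the two semi-closures. You instead argue directly: the dichotomy of Theorem \ref{60}(i) (each singleton is either $s\kappa_1$-closed or $s\kappa_2$-open) feeds straight into the characterization of pairwise semi-$T_0$ given in Theorem \ref{24}. Both arguments ultimately rest on the same mechanism --- Theorem \ref{23} plus the semi-$T_\omega$ hypothesis applied to a singleton --- but your version has two advantages: it dispenses with the contradiction framework, and it sidesteps the paper's unjustified opening step, namely the claim that failure of pairwise semi-$T_0$ yields a pair with $\overline{s\{l\}_{\kappa_2}}=\overline{s\{m\}_{\kappa_1}}$ (this is the converse of Theorem \ref{28}, which is never established in the paper). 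The only care needed in your argument is the labelling of the pair so that the $s\kappa_2$-open set $\{x\}$ in one case, and the $s\kappa_1$-open set $X-\{x\}$ in the other, line up with the asymmetric wording of Definition \ref{4}; since the pair of distinct points is unordered this is harmless, and Theorem \ref{24} as stated already absorbs it.
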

\begin{proof}
Suppose the bispace $ (X,  \kappa_1, \kappa_2) $ is pairwise semi-$ T_\omega $ but it is not pairwise semi-$ T_0 $. Then there exist $ l, m\in\ X; l\not =m $ such that $ \overline{s\{l\}_{\kappa_2}}=\overline{s\{m\}_{\kappa_1}} $. We assert that $ \{l\} $ is not $ s\kappa_2 $-closed. If $ \{l\} $ is $ s\kappa_2 $-closed, then $\overline{s\{l\}_{\kappa_2}}=\{l\}\not=\overline{s\{m\}_{\kappa_1}} $ which contradicts the assumption. So by Theorem \ref{23}, $ \{l\}^c $ is  $ (2-1)sg_\kappa^*$-closed. But the bispace is pairwise semi-$ T_\omega $, so $ \{l\}^c $ is $ s\kappa_1 $-closed. Now we show that $ \{l\}^c $ is not  $ s\kappa_1$-closed. If $ \{l\}^c $ is $s\kappa_1$-closed, then $ m\in \{l\}^c \Rightarrow \overline{s\{m\}_{\kappa_1}} \subset \overline{s(X-\{l\})_{\kappa_1}} = X-\{l\}$. Therefore $ \overline{s\{l\}_{\kappa_2}}\not=\overline{s\{m\}_{\kappa_1}} $ which contradicts the assumption again . Hence the result follows.
\end{proof}

But the converse is not true always as revealed from the Example \ref{14}.

\begin{remark}\label{37}
It is seen that in a topological space, semi-$ T_\frac{1}{2} $ axiom can be placed between semi-$ T_0 $ and semi-$ T_1 $ axioms \cite{BL}. But pairwise semi-$ T_\omega $ axiom in a bispace does not have this property as is evident from  Examples \ref{14} and \ref{26}, rather Examples \ref{14} and \ref{38} show  that pairwise semi-$ T_w $ and pairwise semi-$ T_1 $ axioms in a bispace are independent of each other. It can be easily verified that pairwise semi-$ T_1 $ bispace is pairwise semi-symmetric but converse is not true as in \cite{NL}. In a symmetric topological space, $ T_0, T_\frac{1}{2}, T_1 $ axioms are equivalent \cite{NL}. In a pairwise semi-symmetric bispace, though pairwise semi-$ T_1 $ and pairwise semi-$ T_0 $ axioms are equivalent but pairwise semi-$ T_1 $ bispace may not imply pairwise semi-$ T_w $ as can be seen from the Example \ref{42}.  A topological space is symmetric if and only if each singleton of $ X $ is $ g $-closed \cite{NL}. There exists bispace which is pairwise semi-symmetric but singletons are not $ (j-i)sg_\kappa^* $-closed; $ i,j=1,2;i\not=j $ can be seen from Example \ref{42}.   
\end{remark}

\begin{definition}\label{40}  A bispace $ (X,  \kappa_1, \kappa_2) $ is said to be pairwise strongly  semi-symmetric if each singleton of $  X  $  is $ (j-i)s g_\kappa^*$-closed, $ i,j=1,2; i\not=j $.
\end{definition}

\begin{theorem}\label{41} A  pairwise strongly  semi-symmetric bispace is pairwise semi-symmetric.
\end{theorem}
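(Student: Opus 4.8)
The plan is to verify the defining implication of Definition \ref{31} directly, treating the ordered pair $(i,j)=(1,2)$ in detail and then obtaining $(i,j)=(2,1)$ by interchanging the roles of $\kappa_1$ and $\kappa_2$. Note that the hypothesis of Definition \ref{40} furnishes, for each singleton, \emph{both} the $(2-1)sg_\kappa^*$-closedness and the $(1-2)sg_\kappa^*$-closedness, which is exactly what feeds the two directions. So I fix distinct points $x,y\in X$ with $x\in\overline{s\{y\}_{\kappa_1}}$ and aim to show $y\in\overline{s\{x\}_{\kappa_2}}$; the case $x=y$ is trivial since $\{x\}\subset\overline{s\{x\}_{\kappa_2}}$.

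First I would argue by contradiction: suppose $y\notin\overline{s\{x\}_{\kappa_2}}$. By Theorem \ref{6}, $\overline{s\{x\}_{\kappa_2}}=\{x\}\cup s\{x\}_{\kappa_2}'$, so since $y\neq x$ the point $y$ fails to be a semi $\kappa_2$-limit point of $\{x\}$; hence there is a $s\kappa_2$-open set $V$ with $y\in V$ and $x\notin V$. This produces the separating $s\kappa_2$-open neighbourhood that I will feed into the strong semi-symmetry hypothesis.

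The heart of the argument is the invocation of Definition \ref{40}. Since $\{y\}$ is $(2-1)sg_\kappa^*$-closed, Definition \ref{10} gives a $s\kappa_1$-closed set $F$ with $\{y\}\subset F$ and $F\subset O$ for every $s\kappa_2$-open $O\supset\{y\}$. Applying this with $O=V$ yields $F\subset V$, whence $x\notin F$. Then $X-F$ is a $s\kappa_1$-open set containing $x$ but not $y$, so $x$ is not a semi $\kappa_1$-limit point of $\{y\}$; combined with $x\neq y$ and Theorem \ref{6} this gives $x\notin\overline{s\{y\}_{\kappa_1}}$, contradicting the hypothesis. The contradiction forces $y\in\overline{s\{x\}_{\kappa_2}}$, and the symmetric computation using the $(1-2)sg_\kappa^*$-closedness of $\{y\}$ settles the case $(i,j)=(2,1)$.

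The step I expect to demand the most care is the index matching in the passage from the $(2-1)sg_\kappa^*$-closedness of $\{y\}$ to the $s\kappa_1$-closed witness $F\subset V$ excluding $x$: one must keep straight that the \emph{test} open sets are $s\kappa_2$-open while the \emph{closed witness} lives in $\kappa_1$, since swapping these indices would break the bridge between the two subtopologies. Everything else is routine bookkeeping with the semi-closure formula of Theorem \ref{6} and the definition of semi-limit points.
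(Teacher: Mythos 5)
Your proof is correct and follows essentially the same route as the paper's: argue by contradiction, extract a semi $\kappa_j$-open set separating the two points from the assumed failure of the semi-closure containment, feed it into the $(j-i)sg_\kappa^*$-closedness of the singleton to obtain the $s\kappa_i$-closed witness inside that open set, and contradict the hypothesis $x\in\overline{s\{y\}_{\kappa_i}}$. The only cosmetic difference is that the paper phrases the final step as $\overline{s\{m\}_{\kappa_i}}\subset F'\subset F^c$ while you phrase it via the complement of the witness being a semi $\kappa_i$-open neighbourhood of $x$ missing $y$; these are the same observation.
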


\begin{proof}
Suppose a bispace $ (X,  \kappa_1, \kappa_2) $ is pairwise strongly  semi-symmetric and $ l \in \overline{s\{m\}_{\kappa_i}}$, but $ m\not \in\overline{s\{l\}_{\kappa_j}} $ for $ l, m \in X, i,j=1,2; i\not=j$. So there is a $ s\kappa_j $-closed set $ F \supset \{l\} $ such that $ m \not \in F \Rightarrow \{m\} \subset F^c,$ a $ s\kappa_j $-open set. For pairwise  strongly semi-symmetric bispace, $ \{m\} $ is $ (j-i)sg_\kappa^* $-closed, then there is a $ s\kappa_i $-closed set $ F' \supset \{m\} $ such that $ F' \subset F^c $. So $ \overline{s\{m\}_{\kappa_i}} \subset F' $ and $ l \in \overline{s\{m\}_{\kappa_i}} \subset F' \subset F^c $, a contradiction. Hence the result follows.
\end{proof}

But the converse may not be true as shown by the following Example.

\begin{example}\label{42} Suppose $ X = R - Q $ and $ \kappa_1=\kappa_2 = \{X, \emptyset, G_i \}$ where $ \{G_i\}$ are the countable subsets of $ X $. So $ (X,  \kappa_1, \kappa_2) $  is a bispace but not a bitopological space. Suppose $ l, m \in X $.   If $ l \not = m $ then $ l\in \overline{s\{m\}_{\kappa_i}} $ implies that $ l $ is a $ s\kappa_i $-limit point of $ \{m\} $. But $ \{l\} $ is a $ s\kappa_i $-open set containing $ l $ which does not intersect $ \{m\} $. Hence $ l $ cannot be a  $ s\kappa_i $-limit point of $ \{m\} $. So we must have $ l = m $ and in that case $ m\in \overline{s\{l\}_{\kappa_j}} $. Hence the bispace is pairwise semi-symmetric. But for each singleton $ \{l\}$ in $ X ,   sker_j(l) = \{l\} $ and $ \{l\} $ is not $ s\kappa_i $-closed. So no singleton is $ (j-i)sg_\kappa^* $-closed. Hence the result follows.
\end{example}

\begin{corollary}\label{43}
A pairwise strongly  semi-symmetric pairwise semi-$ T_\omega $ bispace is pairwise semi-$ T_1 $. 
\end{corollary}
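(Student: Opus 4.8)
The plan is to obtain the conclusion by chaining three results already established in the paper, since the two hypotheses feed directly into the hypotheses of those results. First I would invoke Theorem \ref{41}: because the bispace is pairwise strongly semi-symmetric, it is pairwise semi-symmetric. Next I would invoke Theorem \ref{39}: because the bispace is pairwise semi-$T_\omega$, it is pairwise semi-$T_0$. At this point the bispace satisfies both hypotheses of Theorem \ref{32}, namely it is pairwise semi-symmetric and pairwise semi-$T_0$, and that theorem concludes that it is pairwise semi-$T_1$. So the proof is simply the composition: pairwise strong semi-symmetry together with pairwise semi-$T_\omega$ gives pairwise semi-symmetry together with pairwise semi-$T_0$, which in turn gives pairwise semi-$T_1$.

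An alternative, more self-contained route is to argue that under both hypotheses every singleton is both $s\kappa_1$-closed and $s\kappa_2$-closed, which yields pairwise semi-$T_1$ at once. Indeed, pairwise strong semi-symmetry (Definition \ref{40}) says each $\{x\}$ is $(j-i)sg_\kappa^*$-closed for $i,j=1,2,\ i\neq j$, and the pairwise semi-$T_\omega$ condition (Definition \ref{33}) says each $(j-i)sg_\kappa^*$-closed set is $s\kappa_i$-closed. Applying this for both admissible orderings of the ordered pair $(i,j)$ shows that each singleton $\{x\}$ is simultaneously $s\kappa_1$-closed and $s\kappa_2$-closed. Then, for distinct points $x,y$, the set $X-\{y\}$ is a $s\kappa_1$-open set containing $x$ but not $y$, while $X-\{x\}$ is a $s\kappa_2$-open set containing $y$ but not $x$, which is exactly the requirement in Definition \ref{5} for pairwise semi-$T_1$.

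Neither route presents a genuine obstacle: the corollary is a formal consequence of Theorems \ref{41}, \ref{39} and \ref{32}. The only point requiring minor care is the bookkeeping of the index convention ``$i,j=1,2;\ i\neq j$'': one must read the two hypotheses as holding for both admissible orderings of the indices, so that the conclusion of each cited theorem is available in the precise form needed for the next step. I would present the short chaining argument as the proof, and if desired note the direct singleton computation above as a sanity check.
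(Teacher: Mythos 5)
Your first route is exactly the paper's own proof: the authors simply cite Theorems \ref{41}, \ref{39} and \ref{32} in that chained fashion. The proposal is correct, and the direct singleton argument you add as a sanity check is also sound, though not needed.
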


Proof follows from Theorems \ref{32}, \ref{39}, \ref{41}. 

\begin{remark}\label{44}
If the bispace $ (X,  \kappa_1, \kappa_2) $ is pairwise strongly  semi-symmetric, we have the relation among the axioms ``Pairwise semi-$ T_\omega $ axiom $\Rightarrow $ pairwise semi-$ T_0 $ axiom $\Rightarrow $ pairwise semi-$ T_1 $ axiom." 
\end{remark}

\end{document}